\pgfplotsset{compat=1.14} 
\tikzset{
   shift left/.style={decorate,decoration={simple line,raise=#1}},
   shift right/.style={decorate,decoration={simple line,raise=-1*#1}},
}
\newcommand{\N}{\mathbb{N}}
\newcommand{\ind}{\mathbbm{1}}
\newcommand{\p}[1]{\left(#1\right)}
\newcommand{\abs}[1]{\left\vert#1\right\vert}
\newcommand{\floor}[1]{\left\lfloor #1 \right\rfloor}
\newcommand{\Gmat}{ \Gamma}
\newcommand{\Laplvec}[1]{  L^{(#1)}}
\newcommand{\LL}{\mathcal L}
\newcommand{\OO}{\mathcal O}
\newcommand{\TT}{\mathcal T}
\newcommand{\CC}{\mathcal C}
\newcommand{\SSS}{\mathcal S}
\newcommand{\dleq}{\preceq} 
\newcommand{\dl}{\prec} 
\newcommand{\dg}{\succ} 
\newcommand{\dgeq}{\succeq} 
\newcommand{\q}[1]{``#1''}
\newcommand{\Zth}{Z}
\newcommand{\Yth}{Y}
\newcommand{\Ythphi}{Y^\phi}
\newcommand\Wtilde{\stackrel{\sim}{\smash{W}\rule{0pt}{1.15ex}}}
\def\namedlabel#1#2{\begingroup
    #2%
    \def\@currentlabel{#2}%
    \phantomsection\label{#1}\endgroup
}
\newtheoremstyle{mystyle}
{10pt} 
{10pt} 
{\normalfont} 
{} 
{\bfseries} 
{.} 
{.5em} 
{} 
\newtheoremstyle{break}
  {10pt}
  {10pt}
  {\normalfont}
  {}
  {\bfseries}
  {.}
  {\newline}
  {}
\theoremstyle{mystyle}
\newtheorem{remark}{Remark}
\newtheorem{claim}{Claim}
\newtheorem{theorem}{Theorem}
\newtheorem{lemma}{Lemma}
\newtheorem{proposition}{Proposition}
\newtheorem{condition}{Condition}
\newtheorem{defin}{Definition}
\newtheorem{assum}{Assumption}
\title{The real-time growth rate of stochastic epidemics on random intersection graphs}
\author{Carolina Fransson}
\begin{document}

\maketitle

\begin{abstract}

This paper is concerned with the growth rate of SIR (Susceptible-Infectious-Recovered) epidemics with general infectious period distribution on random intersection graphs.  This type of graph is characterized by the presence of cliques (fully connected subgraphs). 
We study epidemics on random intersection graphs with a mixed Poisson degree distribution and show that in the limit of large population sizes the number of infected individuals grows exponentially during the early phase of the epidemic, as is generally the case for epidemics on asymptotically unclustered networks. The Malthusian parameter is shown to satisfy a 
variant of  the classical Euler-Lotka equation. To obtain these results we
construct a coupling of the epidemic process and a continuous-time  multitype branching process, where the type of an individual is (essentially) given by the length of its infectious period. 
Asymptotic results are then obtained via an 
embedded single-type Crump-Mode-Jagers branching process.

\end{abstract}

Keywords: Stochastic SIR epidemic,  Random Intersection graph, Cliques,  Branching process approximation, Malthusian parameter, Regenerative branching processes.

\section{Introduction}

In the earliest epidemic models, it is  assumed  that the disease spreads in a population consisting of homogeneous individuals exhibiting homogeneous mixing. 
Since the advent of those early models, there has been considerable  interest in  incorporating realistic elements from real-world social structures that depart from the simplistic assumption of homogeneity. 
Such realistic features may take the form both of heterogeneity in social behaviour (some individuals may have a higher proclivity to be socially active than others, or the population may exhibit a more complex social structure than homogeneous mixing) and of biological differences in the \q{susceptibility} and \q{infectivity} of individuals. 

To give some examples, for  deterministic epidemic models this has been manifested through 
models where the population is stratified into a relatively small number of classes and individuals interact with each other at a rate that is determined by their classes. Individuals may, for instance, be spatially separated or stratified by age or sex. This typically  gives rise to a system of differential equations, which governs the dynamics of the epidemic \parencite{Watson72, mixture}.

A similar development of increasingly complex social structures has taken place in the field of stochastic epidemic modelling on networks. In particular,  a large body of epidemic models that aim to capture the tendency of individuals who know each other to have mutual acquaintances has appeared in the literature. In the context of models where the social network of the population is fully specified by a graph, this means that the graph is clustered (i.e. it contains a considerable amount of triangles and other short circuits).  Some examples include the great circle model \parencite{ball1997epidemics,greatcircle2003, greatcircleSIS2008} and the closely related small-world network model \parencite[]{smallworld1998}, where individuals typically have both local contacts in a local environment, which exhibits clustering, and global contacts.

In a similar vein, several models that include the presence of small closely connected  groups, or \textit{cliques}, with intense within-clique interactions has been introduced \parencite{Becker95, ball1997epidemics}. A clique may, for instance, represent a household, workplace or school.  
Models with this feature have been investigated in various forms, see for instance  \textcite{households2, households,overlapping,R0I,R0II, ball2012},
 to name a few.
 
 In the present paper, we study the real-time growth rate of an epidemic that spreads on a random graph whose structure, like that of the above-mentioned models, is characterised by the presence of 
 small, (possibly overlapping) highly connected cliques.  
 During the early phase of an epidemic, the number of infectious individuals typically grows exponentially; this is the case for many theoretical models and has also been observed in empirical data \parencite[]{NishiuraChowell2014, WHOteam2015}. 
 The  growth rate 
 is one of the most readily available attributes of an emerging epidemic and it is arguably one of the most natural parameters by which to describe the seriousness  
 of the epidemic.
 For many models of epidemics on random graphs with clustering, obtaining results that concerns the real-time-growth rate is however more challenging than analysing the final outcome of the  epidemic. The reason for this is that results on the final outcome of an epidemic may be obtained without taking the actual chain of transmission into account.  This idea was first mentioned in a paper devoted to epidemic modelling by \textcite{Ludwig} but was, however,  implicitly present in earlier literature on percolation \parencite{BroadbentHammersley1957, perc1}. For this reason, many models lend themselves more readily to analysis of the final outcome than of the real-time-growth rate.
 \textcite{PellisFergusonFraser2011} proposed approximate methods for estimating the so-called household reproduction number based on observations of the  real-time growth rate in a population structured into small (possibly overlapping) communities, both in the Markovian case and under the arguably strong assumption that 
 the total \q{infectivity} of an infectious individual and the time points at which the individual transmits the disease 
  are independent. 
On a related note, \textcite{BallShaw2015} provided methods to estimate the within-household infection rate for an SIR epidemic among a population of
households from the observed real-time growth rate.

As mentioned before, the real-time growth rate of an epidemic in a population with households, schools and workplaces has previously been studied in \parencite{PellisFergusonFraser2011}, where (among other things) heuristic results similar to those presented here were obtained.  
 In this paper, we provide rigorous proofs of these results. 
 It is worth to point out that the methods employed here can be applied to a more general class of random graphs with cliques than the model considered in this paper, and also a 
 more general class of household-school-workplace models than that studied in \parencite{PellisFergusonFraser2011}. In particular, previous results concern only the case where the clique or household sizes are bounded  and do not trivially extend to the setting with unbounded clique sizes, whereas the current paper deals with the unbounded case.

The key tool of this paper is a single-type branching process, which we embed in the epidemic process.  
Our approach is inspired by \textcite{MeinersIksanov}, where a similar embedding was used to obtain the polynomial rate of convergence of multi-type branching processes.  
The techniques employed here are also related to what \textcite{Sagitov2017} calls regenerative Galton-Watson processes and to the concepts of local infectious clumps and global contacts  in \textcite{overlapping}, see also  \textcite{Olofsson1996} which treats multitype branching processes with local dependencies.

To be more specific about the graph model, here we consider  the real-time growth rate of epidemics on a random intersection graph \parencite{intersection}. 
Simply put, a random intersection graph is constructed by dividing the nodes of the graph into groups (a node may belong to zero, one, or several groups) and then connecting  nodes that belong to the same group, 
so that the groups form fully connected (possibly overlapping) subgraphs. Thus, a random intersection graph does, in general, contain a non-negligible amount of short circuits, which makes the widely used branching process approximation  of the early phase of the epidemic somewhat delicate. 
Here we consider the real-time growth rate of epidemics on a random intersection graph  \parencite{intersection} in which the degrees distributions are mixed Poisson. 
 Epidemics on graphs of this type have previously been studied in \textcite{backward_adv}, where  expressions for the asymptotic probability of a major outbreak, the final size of a major outbreak and a threshold parameter were derived. Epidemics on random intersection graphs were also studied in \parencite{tunableclustering}, where the clustering of the underlying network is tunable. 



This paper is structured as follows. In section \ref{sec:notation} we present the notation conventions and abbreviations. Section \ref{sec:graphmod} contains an introduction to the underlying graph model and in section \ref{sec:epmod} we define the epidemic model. The main results are presented in section \ref{sec:main1}. Section \ref{sec:branching processtheory} and \ref{sec:proofs} contains some background theory and proofs of the main results.  


\section{Epidemics on random intersection graphs}

\subsection{Notation and abbreviations}\label{sec:notation}

This section contains a summary of notation conventions and abbreviations that will be frequently used in this paper. 

For any $B\subset \mathbb{R}$ and $x\in \mathbb{R}$ we use the notation $B_{\geq x}=B\cap [ x, \infty)$, and $B_{>x}$, $B_{\leq x}$ and $B_{< x}$ are defined analogously. 
For $x\in\mathbb{R}$, $\floor{x}=\sup\mathbb{Z}_{\leq x}$. 
For real numbers $x$ and $ y$, 
$x\vee y=\max(x,y) $ and 
$\log_+(x)=\log(1\vee x)$. 
For any $n\in \mathbb{Z}_ {\geq 1}$,  $[n]=\{1,\ldots, n\}$. 

Let 
$f:\mathbb{R}\to \mathbb{ R}$ and $g:\mathbb{R}\to \mathbb{ R}_{> 0}$. We write $f(x)=\OO(g(x))$ as $x\to \infty$ to indicate that $\limsup_{x\to\infty} \abs{f(x)}/g(x)<\infty$ and   $f(x)=o(g(x))$ as $x\to \infty$ to indicate that $\limsup_{x\to\infty} \abs{f(x)}/g(x)=0.$ Similarly,  $f(x)=\Uptheta(g(x))$ as $x\to \infty$ if $f(x)=\OO(g(x))$ and $\liminf_{x\to\infty} \abs{f(x)}/g(x)>0$.

For a random variable $X$ and an event $A$, we use the notation $E(X;A)=E(X\ind(A))$ where $\ind(A)$ is the indicator of $A$. 
We denote the mixed Poisson distribution with intensity $X$ by $M\!P(X)$, i.e. $Y\sim M\!P(X)$ means that $(Y\vert X=x)\sim Po(x)$. 
For any non-negative integrable random variable $X$ with  $E(X)>0$, we denote the size biased version of $X$ by $\bar X$, i.e.  for any Borel set $B\subset \mathbb{R}$
$$P(\bar X \in B)=\frac{E\left( X;X\in B\right)}{E(X)  }.$$

We will make frequent use of the abbreviations MP  (mixed Poisson) and SIR (Susceptible $\to$ Infectious $\to$ Recovered). 
Throughout this paper, $G_n$ denotes a random graph on $n$ vertices generated via the random intersection graph model. 
We say that an event occurs with high probability (w.h.p.) if the probability of the event tends to 1 as $n\to\infty$, where $n$ is the  number of vertices of the graph $G_n$ under consideration.

\subsection{The random intersection graph with mixed Poisson degrees}\label{sec:graphmod}
We consider a random intersection graph  model where the degrees of the nodes follow a mixed Poisson distribution. Epidemics on this particular type of graph have previously been investigated by 
\textcite{backward_adv}, who used a branching process coupling to derive expressions for the asymptotic probability of a major outbreak  (i.e. that a fraction $\Uptheta (1)$ of the population contracts the disease in the limit as $n\to\infty$, where $n$ is the population size), the final size of a major outbreak and a threshold parameter. In the present paper, we focus on  the (exponential) real-time growth rate of an epidemic on a random intersection graph in the early phase of a major outbreak. We will give a somewhat brief description of this graph model and refer the reader to \textcite{backward_adv} for a more detailed account. 

A graph $G_n$ on $n$ vertices can be constructed via the mixed Poisson  random intersection graph model as follows (see Figure \ref{fig:random intersection graph_constr} for an illustration of this construction). 
Let $A$ and $B$ be two random variables with expected values $E(A)=\mu_A$ and $E(B)=\mu_B$. We make the following assumption on $A$ and $B$.

\begin{assum}\label{assumMPrandom intersection graph}\begin{enumerate}
    \item[]
\end{enumerate}
 \begin{enumerate}[label=\roman*), ref=\roman*)]
 \item $P(A\geq 0)=P(B\geq 0 )=1$  \label{assumMPrandom intersection graphi}
 \item $P(A= 0)<1$ and $P(B=0 )<1$ 
 \item  $E(A^2\log_+A)<\infty$ and 
 $E(B^2\log_+B)<\infty$\label{assumMPrandom intersection graphiii}
 \end{enumerate}
\end{assum}

We will refer to the condition of assumption \ref{assumMPrandom intersection graph} \ref{assumMPrandom intersection graphiii} as the $x^2\log x$-condition. 

\begin{remark} 
This version of the random intersection graph  can be constructed under less strict assumptions on than the  $x^2\log x$-condition (see \textcite{backward_adv}), it is however needed here for the approximating branching process to satisfy the classical $x\log x$-condition.
\end{remark}

Let $\{A_k\}_k$ and $\{B_k\}_k$ be two sequences of independent copies of $A$ and $B$, respectively. Let further $V_{n}=\{v_1,\ldots, v_n\}$ be the vertex set of $G_n$, and assign the weight $A_i$ to the vertex $v_i$,  $i=1,\ldots,n$. As an intermediate  step, we construct an auxiliary graph $G_n^{\textrm{aux}}$ with vertex set $V_n\cup V_n'$, where  $V'_{n}=\{v_1', \ldots, v_m'\}$ 
and $$m=m(n):=\floor{n\mu_A/\mu_B} .$$ Assign the weight $B_j$ to the vertex $v_j'$, $j=1,\ldots, m$.  Given the weights of the vertices of $V_n$ and $V_n'$, for each pair $v_i, v_j'$ of vertices of $G_n^{\text{aux}}$ such that $v_i\in V_n$ and $ v_j'\in V_n'$ let the number of edges of $G_n^{\text{aux}}$ shared by $v_i$ and $v_j'$ have distribution
$$Po\left(\frac{A_iB_j}{n\mu_A}\right) ,$$
independently for  pairs $v_i,v_j'$. Thus  
in $G_n^{\text{aux}}$
the degree of $v_i\in V_n$ has distribution \begin{gather}\label{eq_po}
Po\left( A_i\frac{\mu_B^{(n)}\floor{ n\mu_A/\mu_B}}{n\mu_A}\right)
\end{gather}
where $\mu_B^{(n)}:={\sum_{j=1}^m B_j}/{m }$. Similarly, in $G_n^{\text{aux}}$
 the degree of $v_j'\in V_n'$ has distribution \begin{gather}\label{eq_poB}
Po\left( B_j\frac{\mu_A^{(n)}}{\mu_A}\right)
\end{gather}
where $\mu_A^{(n)}:={\sum_{i=1}^n A_i}/{n }$.
There are no edges of $G_n^{\text{aux}}$ between pairs  $v_{i_1},v_{i_2}\in V_n$. Similarly,  there are no edges of $G_n^{\text{aux}}$ between pairs of vertices of   $ V_n'$. 

We now obtain the graph $G_n$ from $G_n^{\text{aux}}$ by letting two distinct vertices $v_{i_1}, v_{i_2}\in V_n$ of $G_n$ share an edge if and only if $v_{i_1}$ and $ v_{i_2}$ of $G_n^{\text{aux}}$ have at least one common neighbour in $V_n'$.  
Next, we replace each edge of the undirected graph $G_n$ by two directed edges pointing in the opposite direction. The reason for this modification is that in the epidemic model considered in this paper (see Section \ref{sec:epmod}) infectious contacts are directed. 

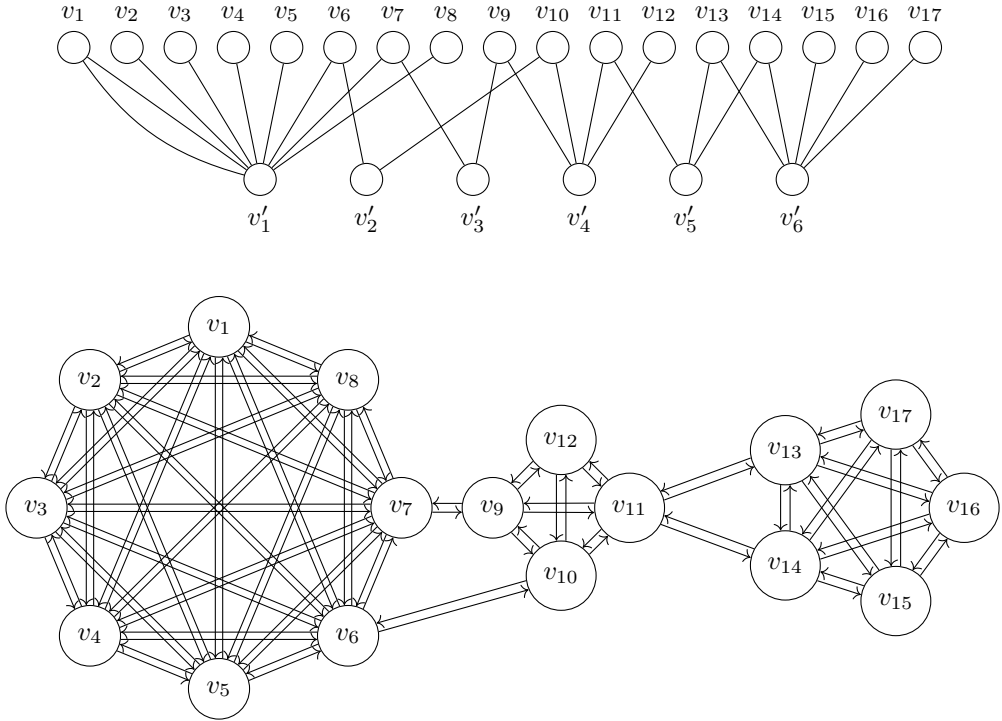
\begin{figure}[!ht]
\centering
\begin{tikzpicture}[scale=0.7]
\begin{scope}[shift={(-8.5,0)}]
  \foreach \number in {1,...,17}{
        \mycount=\number
      \node[draw,circle,inner sep=0.15cm, label={$v_{\number}$}] (N-\number) at (\mycount,0) {};
    }
\end{scope}    
   
\begin{scope}[shift={(-6,-2.5)}]
  \foreach \number in {1,...,6}{
        \mycount=\number
  		\multiply\mycount by 2
      \node[draw, circle, inner sep=0.15cm, label=below:{$v_{\number}'$}] (N'-\number) at (\mycount,0) {};
    }

\end{scope}

\foreach \number in {1,...,8}{
	\path (N-\number) edge[-,bend right=0] (N'-1);
    }

\foreach \number in {9,...,12}{
	\path (N-\number) edge[-,bend right=0] (N'-4);
    }

\foreach \number in {13,...,17}{
	\path (N-\number) edge[-,bend right=0] (N'-6);
    }

\path (N-6) edge[-,bend right=0] (N'-2);
\path (N-10) edge[-,bend right=0] (N'-2);

\path (N-7) edge[-,bend right=0] (N'-3);
\path (N-9) edge[-,bend right=0] (N'-3);

\path (N-11) edge[-,bend right=0] (N'-5);
\path (N-13) edge[-,bend right=0] (N'-5);
\path (N-14) edge[-,bend right=0] (N'-5);

\path (N-1) edge[-,bend right=20] (N'-1);

\end{tikzpicture}

\vspace{.7cm}
\begin{tikzpicture}[transform shape]
  \foreach \number in {1,...,8}{
        \mycount=\number
        \advance\mycount by 1
  \multiply\mycount by 45
        \advance\mycount by 0
      \node[draw,circle,inner sep=0.15cm] (N-\number) at (\the\mycount:2.4cm) {$v_\number$};
    }

  \foreach \number in {1,...,7}{
        \mycount=\number
        \advance\mycount by 1
  \foreach \numbera in {\the\mycount,...,8}{
    \path (N-\number) edge[->,bend right=4, shift right=0.2mm] (N-\numbera) edge[<-,bend left=4, shift left=0.2mm] (N-\numbera);
  }
}

\begin{scope}[shift={(4.5,0)}]
\foreach \number in {9,...,12}{
        \mycount=\number
        \advance\mycount by 1
  \multiply\mycount by 90
        \advance\mycount by 0
      \node[draw,circle,inner sep=0.15cm] (N-\number) at (\the\mycount:0.9cm) {$v_{\number}$};
    }

  \foreach \number in {9,...,11}{
        \mycount=\number
        \advance\mycount by 1
  \foreach \numbera in {\the\mycount,...,12}{
    \path (N-\number) edge[->,bend right=4, shift right=0.3mm] (N-\numbera) edge[<-,bend left=4, shift left=0.3mm] (N-\numbera);
  }
}

\end{scope}

\begin{scope}[shift={(8.5,0)}]
\foreach \number in {13,...,17}{
        \mycount=\number
        \advance\mycount by -1
  \multiply\mycount by 72
        \advance\mycount by 0
      \node[draw,circle,inner sep=0.15cm] (N-\number) at (\the\mycount:1.3cm) {$v_{\number}$};
    }

  \foreach \number in {13,...,16}{
        \mycount=\number
        \advance\mycount by 1
  \foreach \numbera in {\the\mycount,...,17}{
    \path (N-\number) edge[->,bend right=4, shift right=0.3mm] (N-\numbera) edge[<-,bend left=4, shift left=0.3mm] (N-\numbera);
  }
}

\end{scope}

\path (N-7) edge[->,bend right=4, shift right=0.3mm] (N-9) edge[<-,bend left=4, shift left=0.3mm] (N-9);
\path (N-6) edge[->,bend right=4, shift right=0.3mm] (N-10) edge[<-,bend left=4, shift left=0.3mm] (N-10);

\path (N-11) edge[->,bend right=4, shift right=0.3mm] (N-13) edge[<-,bend left=4, shift left=0.3mm] (N-13);
\path (N-11) edge[->,bend right=4, shift right=0.3mm] (N-14) edge[<-,bend left=4, shift left=0.3mm] (N-14);

\end{tikzpicture}
\caption{Construction of $G_n$ for $n=17$ with $\abs{V_n'}=\abs{\{v_1',\ldots,v_6'\}}=6$ cliques. Top: the auxiliary graph $G_n^{\text{aux}}$. Bottom: the resulting directed graph $G_n$.}\label{fig:random intersection graph_constr}
\end{figure}

For later reference, we point out that each clique  $\CC=(V_{\CC}, E_{\CC})$ of $G_n$ may be viewed as a (directed) subgraph  of $G_n$. Here the vertex set $V_{\CC}\subset V_n$ consists of the neighbours (in $G_n^{\text{aux}}$) of  $v'$ where $v'\in V_n'$ is the vertex that corresponds to $\CC$, and  $E_{\CC}$ is the  edge set of the simple directed complete graph $\CC$ (that is, for any pair $u,v\in V_\CC$ of distinct vertices there are precisely two edges $(u,v), (v,u)\in E_\CC$, and $E_\CC$ contains no self-loops).

\subsection{The epidemic model}\label{sec:epmod}

We consider a stochastic SIR epidemic on $G_n$.  
In the SIR model, individuals are classified as {susceptible} (S), {infectious} (I) or {recovered} (R) depending on their current health status. An individual who is classified as infectious can transmit the disease to other individuals in the population; if an infectious individual contacts a susceptible individual then transmission occurs and the susceptible individual immediately becomes infectious. An infectious individual will eventually recover from the disease after some period of time, which we refer to as the \emph{infectious period} of the individual in  question 
(in our model we allow for the infectious period of an individual to be $\infty$, which means that once the individual in  question has contracted the disease it will remain infectious forever). Recovered individuals are fully immune to the disease; once recovered, an individual plays no further role in the spread of the disease.  
For simplicity, we assume that the epidemic starts with one initial infectious case and that the rest of the population is initially fully susceptible to the disease. 
We assume that the disease spreads in a population of size $n$, where the underlying social network of the population is represented by $G_n$. In our model, a \q{close  contact} (a contact that results in transmission if a susceptible individual is contacted by an infectious individual) can only occur between individuals who are neighbours in $G_n$. 
Throughout the paper, we will use the terms individual and vertex interchangeably.

To be precise, the epidemic process can be defined as follows. 
Let $ I$ be a random variable with support in $\mathbb{R}_{\geq 0}\cup\{ \infty\}$. Each vertex $v_i$ of $G_n$ is equipped  with an infectious period $I_i$, where $\{I_i\}_i$ is a sequence of independent copies of $I$. 
Let $\{T_{ij}\}_{ij}$ be a sequence of identically distributed  random variables with support in $[0,\infty)$ such that $T_{i_1j}$ and $T_{i_2j}$ are independent if $i_1\neq i_2$. Similarly, we assume that $T_{i_1j}$ and $I_{i_2}$ are independent if $i_1\neq i_2$. Here $T_{ij}$ represents the time elapsed from the event that $v_i$ contracts the disease (which might or might not occur) to the event that $v_i$ contacts $v_j$. In many standard models, $T_{ij}$ are exponetially distributed. 
For each (directed) edge $(v_i, v_j)$, we equip $(v_i, v_j)$ with the transmission weight $$T_{ij}':=\begin{cases}
T_{ij} & \text{if } T_{ij}\leq I_i\\
\infty & \text{if } T_{ij}> I_i.
 \end{cases}$$
The transmission weight $T_{ij}' $ represents the time elapsed from the event that $v_i$ contracts the disease to the event that $v_i$ makes an infectious contact with $v_j$, which results in the transmission of the disease to $v_j$ if $v_j$ is still susceptible. 
We make the following assumption on the distribution of $T'_{ij}$.

\begin{assum}\label{assum:nonexpl}
$P(T'_{ij}=0)<1/(\mu_{\bar A}\mu_{\bar B})$
and the distribution of $T'_{ij}$  is non-lattice (i.e. $P(T_{ij}'\in\{\infty, 0,s,2s,\ldots\})<1$ for any $s>0$).
\end{assum}

The first part of Assumption \ref{assum:nonexpl}, $P(T'_{ij}=0)<1/(\mu_{\bar A}\mu_{\bar B})$, ensures that the approximating branching process does not explode (i.e. that the branching process population does not grow infinitely large in finite time). 

A path $\varsigma=(v_{i_1}, v_{i_2}, ...,v_{i_k}) $ is any finite sequence of vertices of $G_n$ such that $(v_{i_{r}} ,v_{i_{r+1}})$ is an edge of $G_n$, $r=1,\ldots k-1. $ We define the length $\ell(\varsigma)$ of a path   $\varsigma=(v_{i_1}, v_{i_2}, ...,v_{i_k}) $ as $$\ell(\varsigma)=\sum_{r=1}^{k-1} T_{i_ri_{r+1}}'. $$
Denote the collection of all paths from a vertex $u$ to a vertex $v$ by $\Sigma_{uv}$. 
The distance (transmission time) from $u$ to $v$ is 
given by 
$$d(u,v):=\min_{\varsigma}\ell(\varsigma)$$
where the minimum is taken over all paths $\varsigma\in\Sigma_{uv }$. We make the conventions $d(u,u)=0$ and $d(u,v)=\infty$ if $\Sigma_{uv}$ is empty for two distinct  vertices $u$ and $v$. 
\begin{remark}
Strictly speaking, $d$ is a quasi-distance rather than a distance since it is not symmetric. We do, however, abuse terminology for convenience. 
\end{remark}
The initial infected case $u_*$ is then selected according to some rule; a common choice  which we will adhere to here is to select the initial case  uniformly at random. We assume that the initial case $u_*$ contracts the disease at time 0. 
The time evolution of an outbreak can now be fully specified as follows.
An individual $v_i$, $i=1,\ldots, n$, has contracted the disease at time $t\geq 0$ if and only if $d(u_*,v_i)\leq t$, and $v_i$ has recovered from the disease at $t$ if and only if $d(u_*,v_i)+I_i\leq t$.

We will also need the \emph{within-clique distance}. Let $\CC$ be a clique of $G_n$.  For two vertices $u $ and $v$ 
let $\Sigma^{\CC}_{uv}$ be the collection of paths from $u$ to $v$ restricted to $\CC$. That is, a path $\varsigma$ from $u$ to $v$ belongs 
$\Sigma^{\CC}_{uv}$  whenever every edge of $\varsigma$ is also an edge of $E_{\CC}.$ Note that $\Sigma_{uv}^\CC$ is empty if $u$ and $v$ are not both members of $\CC$. 
The distance from $u$ to $v$ restricted to $\CC$ is 
given by 
$$d_{\CC}(u,v):=\min_{\varsigma}\ell(\varsigma)$$
where the minimum is taken over all paths $\varsigma\in\Sigma_{uv }^{\CC}$. As before $d_{\CC}(u,u)=0$ holds whenever $u\in V_{\CC}$, and $d_{\CC}(u,v)=\infty$ if $\Sigma_{uv}^{\CC}$ is empty. 


For any clique $\CC$, we refer to the first individuals of $\CC$ to contract the disease as the \emph{primary cases of $\CC$}. That is, given that the disease reaches $\CC$ (i.e. $\min_{w\in V_\CC}d(u_*,w)<\infty$),  a vertex $u\in V_\CC$ is a 
primary case of $\CC$ if $d(u_*,u)=\min_{w\in V_\CC}d(u_*,w)$. If $v\in  V_\CC$  contracts the disease but is not not a primary case of $\CC$, we say that $v$ is a \emph{secondary case of $\CC$},
regardless of whether $v$ is infected directly by a primary case or via some other path (which may or may not go via $u$). It is worth to point out that the primary case is almost surely unique if the transmission weight distribution has no atoms.

\section{Main results}\label{sec:main1}

In this section we present the main results and give a rough outline of the ideas behind the proofs. These proofs rely on asymptotic results on finite-type branching processes \parencite{Nerman1981} via a coupling of an epidemic process on $G_n$ and a single-type branching process. 
We point out that 
a salient feature of branching processes is that the lives of individuals that belong to different branches of the branching process tree are independent (conditioned on their types in the multitype case). 
In the epidemic process, however, the infectious individuals in a clique \q{compete} in transmitting the disease to the remaining susceptible individuals in the clique.
Therefore, naive attempts to couple a finite-type branching process with the epidemic process will in general 
give rise to non-local dependencies between the individuals of the branching process tree.
To deal with this, we will employ the branching process embedding presented below.

In the early phase of an outbreak, 
the epidemic process on $G_n$ can be coupled with a branching process $\Zth$ with type space 
$$\TT_\theta:=\TT\cup\{\theta\},$$
where $\TT$ is the support of the generic infectious period $I$ and the extra point $\theta$ is an atom  for the reproduction kernel of $Z$ in the sense of \textcite[Def. 4.3]{nummelin}.  The main idea of this section is to embed a single-type branching process $\Yth$ in $\Zth$ by letting the type-$\theta$ individuals of $\Zth$ be the individuals of $\Yth$. This allows us to employ the almost sure asymptotic results (see Section \ref{sec:branching processtheory} for an overview) that are available for single-type branching processes.

In $Z$, the type of an individual that was infected via a clique $\CC$ of size $\abs{V_{\CC}}=2$  is taken to be $\theta$, and the type of any other individual (except the individual that corresponds to the initial infectious case)
is taken to be the length of its infectious period.
The individuals of $Z$ are divided into generations by attributing 
all secondary cases in a clique to the primary case of the clique in question, even though   
it may well be that 
a secondary case does not get  infected directly by the primary case. 
In other words, if we follow the epidemic trail from $v$ back to the initial case $v_*$ then the generation of $v$ is the number of cliques that has to be traversed to reach $v_*$ (including the cliques of $v$ and $v_*$). 
If the extinction probability of the approximating branching process $\Zth$ is strictly smaller than 1 then $\Zth$ is said to be \emph{supercritical}, and we say that we are in the supercritical regime.

Let $\SSS$ be the set of all individuals of $\Zth$. 
We assume that there is one individual $a_0$ of generation 0 from which every other individual of $\SSS$ stems.
We will refer to the common ancestor $a_0\in \SSS$ as \emph{the first ancestor } of $Z$. 
The law of the life of  $a_0$ is usually different from the laws of the other branching process individuals since the initial case is assumed to be selected uniformly at random from the population, whereas 
individuals of subsequent generations represent infectious cases whose degree distribution is size biased. 
 For this reason, the initial case is a member of  $ M\!P(A)$ cliques while 
 the number of cliques that a non-initial case is a member of is distributed as $\bar D$ cliques, where $\ D\sim M\!P(A)$. 
 The following claim is easily checked (see also \textcite{backward_adv}). 

\begin{claim}
If $D\sim M\!P(A)$ then $(\bar D-1)\sim M\!P(\bar A).$

\end{claim}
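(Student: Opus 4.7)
My plan is a direct computation of probability mass functions. First I would unpack the definitions: $D\sim M\!P(A)$ gives $P(D=k)=E[e^{-A}A^k/k!]$ and $E(D)=E(A)=\mu_A$, so the size-bias formula yields, for $k\geq 1$,
$$P(\bar D=k)=\frac{kP(D=k)}{\mu_A}=\frac{E\!\left[e^{-A}\frac{A^k}{(k-1)!}\right]}{\mu_A}.$$
Shifting the index by setting $j=k-1$, this becomes
$$P(\bar D-1=j)=\frac{E\!\left[A\cdot e^{-A}\frac{A^j}{j!}\right]}{\mu_A},\qquad j\geq 0.$$
Next I would compute the law of $Y\sim M\!P(\bar A)$: applying the size-bias definition to the bounded measurable function $a\mapsto e^{-a}a^j/j!$ gives
$$P(Y=j)=E\!\left[e^{-\bar A}\frac{\bar A^j}{j!}\right]=\frac{E\!\left[A\cdot e^{-A}\frac{A^j}{j!}\right]}{\mu_A},$$
which matches the expression above. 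Hence $\bar D-1$ and $Y$ have the same distribution.

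The underlying principle, worth a one-line remark, is the classical fact that size-biasing a $Po(\lambda)$ variable produces $1+Po(\lambda)$, combined with the observation that size-biasing commutes with the Poisson mixture in the natural way (both operations reweight the intensity measure by $a/\mu_A$). There is essentially no obstacle: the claim collapses to the equality of two integrals that are identical on inspection, and the whole argument can be compressed into a single display if brevity is preferred.
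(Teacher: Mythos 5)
Your computation is correct and is precisely the routine verification the paper alludes to when it says the claim is ``easily checked'': size-biasing the mixed Poisson pmf, shifting the index, and recognizing the resulting reweighting $E\left[A\,e^{-A}A^{j}/j!\right]/\mu_A$ as the pmf of $M\!P(\bar A)$ via the size-bias identity $E\left[f(\bar A)\right]=E\left[A f(A)\right]/\mu_A$. No gaps; the one-line remark about size-biasing a Poisson giving $1+\mathrm{Po}(\lambda)$ is a nice conceptual gloss but not needed.
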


Thus,  a non-initial case is a member of approximately  $  M\!P(\bar A)$ cliques that are not yet affected by the disease. 
By a similar reasoning the size of a clique (excluding the primary case of the clique) reached during the early phase of the epidemic has approximately distribution $M\!P(\bar B)$.

Now, let $F_k,\ k\in\mathbb{N}_{\geq 2}$, be the cumulative distribution function of the transmission time from the primary case in a clique of size $k$ to another (specific) member of the clique. That is to say, for a clique $\CC$ with $\abs{V_{\CC}}=k$ and two fixed individuals $u, v\in V_{\CC}$, $F_k$ is the cumulative distribution function of the within-clique distance $d_{\CC}(u,v)$. 
Let $p^{\bar B}_k:=P(M\!P(\bar B)=k)$ and define the vector 
 $\Gmat^T=\mu_{\bar A}(1\cdot p_1^{\bar B},2\cdot p_2^{\bar B},\ldots)$\label{gammavec} where $\mu_{\bar A}=E(\bar A)=E(A^2)/\mu_A$. For a clique $\CC$ of  size $\abs{V_{\CC}}=k$ and any $\lambda\geq 0$  let 
\begin{gather}\label{lapltransf}
\LL_k^{(\lambda)}:=\int_{\mathbb{R}_{\geq 0}}e^{-t\lambda}F_k(dt)
\end{gather}
  be the Laplace transform of the transmission time within $\CC$, 
  and  define the vector 
\begin{gather}\label{eq:laplacedef}
\Laplvec{\lambda}=(
\mathcal L_{2}^{(\lambda)},
\mathcal L_{3}^{(\lambda)},
\ldots)^T.     
\end{gather}

As we will prove in Section \ref{sec:proofs}, the Malthusian parameter can be found by solving the Euler-Lotka equation: 

\begin{defin}\label{def:Malt}
The \emph{Malthusian parameter} is the unique solution $\alpha>0$  of 
$ \Gmat\cdot \Laplvec{\alpha} =1$.
\end{defin}

We are now ready to state our main results.  

\begin{theorem}\label{main1}
Under Assumptions \ref{assumMPrandom intersection graph} \ref{assumMPrandom intersection graphi}-\ref{assumMPrandom intersection graphiii} and \ref{assum:nonexpl} and 
if the Malthusian parameter $\alpha>0$ exists 
then 
there exists a non-negative integrable  random variable $W$ such that
$$\frac{\abs{\Zth_t}}{e^{\alpha t}}\overset{a.s}{\to} W$$
 where $W$ satisfies  $P(\{\abs{\Zth_t}\not\to 0\}\Delta \{ W>0\} )=0$.
\end{theorem}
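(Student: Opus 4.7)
My plan is to view the total $\Zth$-population through the single-type embedded process $Y$ whose individuals are the type-$\theta$ individuals of $\Zth$, and then to invoke Nerman's convergence theorem for general (Crump-Mode-Jagers) branching processes counted by random characteristics \parencite{Nerman1981}. The atom property of $\theta$ is exactly what turns the embedded process into a bona fide single-type CMJ process: conditional on the birth of a type-$\theta$ individual $y$, the law of the sub-tree of $\Zth$ initiated by $y$ and truncated at its first type-$\theta$ descendants depends only on the birth time of $y$, and these sub-trees are independent across different $y$'s. Encoding each such sub-tree by the point process $\xi_y$ on $[0,\infty)$ recording the relative birth times of the next-generation $\theta$-descendants of $y$, one obtains an i.i.d.\ family $\{\xi_y\}_y$, so that $Y$ is a single-type CMJ process with reproduction point process $\xi$.

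To recover $\abs{\Zth_t}$ from $Y$, assign every $v\in\SSS$ to its most recent type-$\theta$ ancestor in $\Zth$ (itself, if $v$ has type $\theta$), and for each $y\in Y$ born at time $\tau_y$ set
$$\phi_y(s):=\#\{v\in\SSS:\,v\text{ is assigned to }y\text{ and is alive at time }\tau_y+s\}.$$
Since $\phi_y$ depends only on the life history of $y$ truncated at its $\theta$-descendants, $\phi$ is a random characteristic in the sense of Nerman, and by construction
$$\abs{\Zth_t}=\sum_{y\in Y:\,\tau_y\leq t}\phi_y(t-\tau_y)=:Y^\phi_t.$$
Moreover $\phi_y(s)$ is dominated by the total $\Zth$-size of $y$'s truncated sub-tree, which has finite expectation in the supercritical regime, so $\phi$ is a perfectly well-behaved characteristic.

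The Malthusian parameter of $Y$ is the unique $\alpha>0$ satisfying $\int_0^\infty e^{-\alpha t}E[\xi(dt)]=1$. A direct computation of $E[\xi(dt)]$ — using that a non-initial type-$\theta$ individual belongs to $(\bar D-1)\sim M\!P(\bar A)$ further cliques, that a clique reached during the early phase has size distributed as $M\!P(\bar B)$, that the type-$\theta$ next-generation descendants in a clique of size $k\geq 2$ are obtained by restricting within-clique transmission times to the other $k-1$ members (with CDF $F_k$), and that the extra $\theta$-status is automatic when $k=2$ and picked up for general $k$ only via further ``internal'' size-$2$ cliques in the chain — identifies this Malthusian parameter with the $\alpha$ of Definition \ref{def:Malt}. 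The technical heart of the proof then lies in verifying the classical $x\log x$-condition $E[\hat\xi(\alpha)\log_+\hat\xi(\alpha)]<\infty$ where $\hat\xi(\alpha)=\int e^{-\alpha t}\xi(dt)$. Bounding $\hat\xi(\alpha)$ by a sum indexed over cliques joined by $y$, with each summand controlled by the size of the corresponding clique (having the size-biased $M\!P(\bar B)$ distribution) and the number of summands distributed as $M\!P(\bar A)$, the standard sum-of-i.i.d.\ inequalities for $x\log x$-moments reduce the task to finiteness of $E[\bar A^2\log_+\bar A]$ and $E[\bar B^2\log_+\bar B]$, which are exactly the $x^2\log x$ moments of Assumption \ref{assumMPrandom intersection graph}\ref{assumMPrandom intersection graphiii}.

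Once the Malthusian parameter and $x\log x$-condition are in place, and Assumption \ref{assum:nonexpl} provides the non-lattice and non-explosion requirements on $\xi$, Nerman's theorem yields an a.s.\ limit $W'$ of $e^{-\alpha t}\abs{Y_t}$ with $\{W'>0\}=\{Y\text{ survives}\}$ a.s., as well as an a.s.\ limit of $e^{-\alpha t}Y^\phi_t$ equal to a deterministic positive multiple of $W'$. Setting $W$ equal to this limit and noting that on $\{\abs{\Zth_t}\not\to 0\}$ the process $Z$ almost surely produces infinitely many type-$\theta$ descendants (since $\theta$ appears with positive intensity in the reproduction kernel as soon as $P(M\!P(\bar B)=1)>0$, which holds under Assumption \ref{assumMPrandom intersection graph}), the events $\{Y\text{ survives}\}$ and $\{\abs{\Zth_t}\not\to 0\}$ agree a.s., giving the claimed positivity dichotomy. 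The main obstacle is the $x\log x$-verification in the unbounded-clique regime: bridging the gap between the $x^2\log_+x$-moment hypothesis on $A,B$ and the $x\log_+x$-moment requirement on $\hat\xi(\alpha)$ through the appropriate size-biasing bookkeeping is what distinguishes this setting from the bounded-size case treated in previous work.
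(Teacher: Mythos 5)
Your high-level architecture coincides with the paper's: individuals of the embedded single-type process are the type-$\theta$ individuals of $\Zth$, every $v\in\SSS$ is attributed to its most recent type-$\theta$ ancestor (the paper's blocks $\mathscr B_x$), $\abs{\Zth_t}$ is recovered as the embedded process counted with the random characteristic $\phi$, and Nerman's theorem is invoked. However, there is a genuine gap at exactly the step you call the technical heart. The reproduction point process $\xi$ of a type-$\theta$ individual $y$ in the embedded process records the type-$\theta$ individuals of the \emph{whole block} $\mathscr B_y$, i.e.\ of chains of non-$\theta$ individuals of unbounded depth; the cliques that contribute to $\xi^{(\alpha)}$ are those joined by every non-$\theta$ member of the block, not just the $M\!P(\bar A)$ cliques joined by $y$ itself. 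Without the exponential discount the block need not even be integrable: its internal offspring mean is $\Gmat_{\geq 2}\cdot\Laplvec{0}_{\geq 3}=\mu_{\bar A}\sum_{k\geq 2}k\,p_k^{\bar B}$, which can exceed $1$; only $\Gmat_{\geq 2}\cdot\Laplvec{\alpha}_{\geq 3}<1$ is available. So bounding $\xi^{(\alpha)}$ by a sum of $M\!P(\bar A)$ clique-indexed terms and applying ``standard sum-of-i.i.d.\ inequalities'' does not work; one must control the $x\log x$ moment of an exponentially discounted functional of a randomly stopped multitype tree. The paper handles this by an entirely different mechanism: it compares the embedded martingale $\widehat W_n=\sum_{x\in\mathscr J_n}e^{-\alpha\tau_x}$ with the multitype martingale $\Wtilde_n$ weighted by the harmonic function $h=G_\theta(\cdot,\{\theta\})$, uses the optimal-lines result that the two limits coincide almost surely, shows $h$ is bounded, and verifies the Meiners--Iksanov criterion through the $h$-twisted type chain, whose return time to the atom $\theta$ is geometric; the $x^2\log x$ assumption enters only via the separate lemma that for $J\sim M\!P(X)$, $E(X^2\log_+X)<\infty$ is equivalent to $E(\bar J\log_+\bar J)<\infty$. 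None of this machinery (or a workable substitute) is present in your sketch, and the reduction as you state it would fail.

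The same oversight propagates to your treatment of the characteristic and of Nerman's side conditions: the claim that $\phi_y$ is dominated by the total size of the truncated sub-tree, ``which has finite expectation in the supercritical regime,'' is unjustified and in general false, for the reason above. The paper instead verifies the finite mean age at childbearing and the two integrability conditions by discounting at rate $\alpha-\varphi$ with $\varphi>0$ small enough that $\Gmat_{\geq 2}\cdot\Laplvec{\alpha-\varphi}_{\geq 3}<1$, bounding $\sup_t e^{-\alpha t}\phi(t)/g(t)$ by $\sum_{y\in\mathscr B}e^{-(\alpha-\varphi)\tau_y}$. Two smaller points: the first ancestor is neither of type $\theta$ nor size-biased, so the limit theorem must be applied to the subtrees rooted at its children (the paper's Remark following Nerman's theorem), and the identification $P(\{\abs{\Zth_t}\not\to 0\}\Delta\{W>0\})=0$ rests on uniform integrability ($E(\widehat W)=1$) obtained from the $x\log x$ condition, not merely on $P(M\!P(\bar B)=1)>0$. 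These latter issues are repairable, but the $x\log x$ verification as proposed is not a proof.
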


Here $\Delta$ denotes the symmetric difference, i.e. $A\Delta B=(A\setminus B)\cup (B\setminus A)$ and $\abs{ Z_t}$ denotes the number of individuals of $Z$ that are alive at time $t$. 
In the notation $\{\abs{\Zth_t}\not\to 0\}$ it is implicit that the limit is taken as $t$ tends to $\infty$, i.e. $\{\abs{\Zth_t}\not\to 0\}$ is the event that the branching process population of $Z$  ultimately avoids extinction.

\begin{theorem}\label{main2}

Let $(G_n)_n$ be a sequence of graphs generated via the random intersection graph model and assume that  the assumptions of Theorem \ref{main1} hold and let  $(\varepsilon_n)_{n\geq 1}$  be a sequence in $(0,\infty)$  that satisfies $\varepsilon_n\log(n)\to \infty$ as $n\to 0$. 
Then for any $q\geq 2$, $q\neq 3$, that satisfies 
$E(A^{q})<\infty $ and $E(B^{q})<\infty $  there exist couplings of the epidemic process on the $G_n$ and $\Zth$ such that  
 the two processes agree w.h.p.~until at least $n^{\gamma-\varepsilon_n}$ individuals have contracted the disease. Here $\gamma=\min\p{\frac{1}{2}, \frac{q-3/2}{q}}  $.

\end{theorem}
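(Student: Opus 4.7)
My approach is to build the epidemic on $G_n$ and the branching process $\Zth$ on a common probability space via a simultaneous breadth-first exploration, of the sort suggested by the commented-out algorithm in Section \ref{sec:graphmod}. When a newly infected individual $v$ is processed, its number of unexplored clique memberships is sampled as a copy of $M\!P(\bar A)$, and each such clique is chosen via a size-biased draw from the multiset $\{B_1,\dots,B_m\}$; within each newly revealed clique the remaining members are then sampled as an $M\!P(\bar B)$-sized family of size-biased draws from $\{A_1,\dots,A_n\}$. Under this joint construction, the epidemic exploration on $G_n$ and the process $\Zth$ agree exactly until the first \emph{miscoupling}, namely the first draw that lands on a vertex or clique already explored. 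Let $\tau$ denote this first miscoupling time, expressed as the number of individuals infected in $\Zth$ at the moment it occurs. The theorem reduces to showing $P(\tau\leq n^{\gamma-\varepsilon_n})=o(1)$.

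The central estimate is a weighted birthday-problem bound. After $k$ infections the conditional probability that the next size-biased vertex draw hits an already-explored vertex is
$$\frac{\sum_{v\in\mathcal E_k}A_v}{\sum_{i=1}^n A_i},$$
where $\mathcal E_k$ is the set of explored vertices. Since $\sum_i A_i = n\mu_A(1+o(1))$ by the law of large numbers and since each $v\in\mathcal E_k$ entered via a size-biased draw, the numerator has expectation $k\mu_{\bar A}$. When $q\geq 3$ one can use second-moment concentration of the numerator together with the bound $\max_i A_i\leq n^{1/q}$ w.h.p.\ (from Markov applied to $E(A^q)$) to obtain a per-step collision probability of $O(k/n)$; summing over $k\leq K$ and treating clique-collisions analogously using $B$, $\bar B$, and $E(B^q)$ yields a total miscoupling probability of $O(K^2/n)$, and hence the exponent $\gamma=1/2$.

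For $q\in[2,3)$ the size-biased distribution $\bar A$ has only $q-1\in[1,2)$ moments, so $\sum_{v\in\mathcal E_k}A_v$ is no longer well-concentrated around $k\mu_{\bar A}$ and is vulnerable to atoms of size up to $n^{1/q}$. The remedy is a truncation argument: split each size-biased draw into contributions from vertices with $A_v\leq L$ (controlled by expectation, giving per-step collision probability $\lesssim kL/n$ from the truncated numerator) and $A_v>L$ (controlled by a union bound, using that there are only $O(n/L^q)$ such vertices w.h.p.\ and that each is picked in a given draw with probability $\lesssim L/n$). Iterating the two contributions over $k$ up to $K=n^{\gamma-\varepsilon_n}$ draws and optimising the threshold $L$ against the accumulated error yields the exponent $\gamma=(q-3/2)/q$. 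The value $q=3$ is excluded because the two regimes cross with a logarithmic correction that must be handled separately. The same truncation scheme applied to $B$ and $\bar B$ bounds the clique-collision probability.

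Finally, the total number of size-biased draws made before $k$ infections are reached is, with high probability, of the same order as $k$: each infection triggers an $M\!P(\bar A)$ family of clique-draws and each clique an $M\!P(\bar B)$ family of vertex-draws, and by Wald together with Markov the cumulative counts concentrate around their means. Combining this with the per-draw collision bounds above and taking $k=n^{\gamma-\varepsilon_n}$ gives $P(\tau\leq n^{\gamma-\varepsilon_n})=o(1)$. The main obstacle is the truncation analysis in the regime $q\in[2,3)$: the balancing between the mean contribution of typical size-biased atoms and the rare but decisive contribution of the heaviest atoms is delicate, and I expect it is this balancing that drives the precise exponent $(q-3/2)/q$ rather than the naive $\gamma=1/2$ heuristic.
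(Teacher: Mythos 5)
Your proposal has a genuine gap: it misattributes the exponent $\gamma$ entirely to the collision (birthday-problem) analysis, and in doing so omits the step that actually produces $\frac{q-3/2}{q}$. In your joint construction you sample each new individual's number of unexplored cliques ``as a copy of $M\!P(\bar A)$'' and clique sizes as $M\!P(\bar B)$ — but that is the law of the limiting process $\Zth$, not of the exploration on $G_n$. The graph is built from the fixed realized weights, so the exploration on $G_n$ draws from the \emph{empirical} size-biased laws $M\!P(\bar A_{(n)})$, $M\!P(\bar B_{(n)})$ (with the additional deterministic factors $\mu_B^{(n)}\floor{n\mu_A/\mu_B}/(n\mu_A)$ and $\mu_A^{(n)}/\mu_A$ from (\ref{eq_po})--(\ref{eq_poB})). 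Coupling these to the ideal laws of $\Zth$ costs, per draw, a total-variation error which the paper controls via $d_{TV}(Po(a),Po(b))\lesssim \abs{a-b}/\sqrt{a\vee b}$ and a Fournier--Guillin-type bound on $E\abs{\sqrt{\bar A}-\sqrt{\bar A_{(n)}}}$ (Propositions \ref{claim2} and \ref{lemma:wass_dist}, plus Lemma \ref{lemma0} for the deterministic factors). It is this empirical-measure convergence rate — not collisions — that brings in the moment hypothesis $E(A^q)<\infty$, the exponent $\min(1/2,(q-3/2)/q)$, and the logarithmic correction that forces the exclusion of $q=3$. Your proposal never addresses this discrepancy, so the coupling with $\Zth$ as defined is not established.

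Moreover, the mechanism you propose in its place does not behave as you claim. The per-step collision probability, conditional on the weights, has conditional expectation $(k-1)\sum_i A_i^2/(\sum_i A_i)^2$; summing over $k\leq j_n$ and using only the law of large numbers for $\sum_i A_i^2/n$ gives total collision probability $\approx \frac{j_n^2}{2n}\,E(A^2)/\mu_A^2$, which is $o(1)$ whenever $j_n=o(\sqrt n)$ and $E(A^2)<\infty$ — no concentration of the size-biased numerator and no truncation of heavy atoms is needed (this is exactly Claim \ref{claim:coupling}, proved by Markov's inequality on the conditional expectation). So under $q\geq 2$ the collision analysis always permits exploration up to nearly $n^{1/2}$ vertices; the heavy tail of $\bar A$ for $q\in[2,3)$ does not degrade this to $(q-3/2)/q$, and the delicate truncation balancing you expect to ``drive the precise exponent'' is not where the constraint lies. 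To repair the argument you would keep your collision bound (simplified as above) for the $1/2$ part, and add the per-draw empirical-vs-true coupling estimate, accumulated over $n^{\gamma-\varepsilon_n}$ draws via Markov and the union bound, for the $(q-3/2)/q$ part.
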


\section{Branching process theory} 
To present the main idea that underpins the proof of Theorem \ref{main1}, we need a  slightly more formal description of $Z$ (see e.g. \textcite{Jagers1989} for a full  formal framework).  
We assume that each individual of the branching process tree $Z$ has a countable infinite number of children, which makes $\SSS$ countable. 
The basic probability space on which $Z$ is defined is given by the product  probability space
$$\prod_{v\in\SSS}(\Omega_v, \mathcal A_v).$$
On each $(\Omega_v, \mathcal A_v)$ there is defined 
a point process $\xi_v$ on 
$[0,\infty]\times \TT_\theta$, which we refer to as the reproduction point process of $v$;  a point $(t, r)$ of $\xi_v$ corresponds to 
a type-$r$ individual produced by $v$ at age $t$, and there is a one-to-one correspondence between the children of $v$ and the points of $\xi_v$. Note that  $t=\infty$ is allowed; this has the interpretation that $v$ and its descendants are never born. 
For each $u\in \SSS $, let $\tau_u\in [0,\infty]$ be the  time point of $u$'s birth. 
We say that $u\in \SSS$ is \emph{realized} if $\tau_u<\infty$,  
and write $\abs{v}=n$ to indicate that an individual $v\in\mathcal S$ of $\Zth$ belongs to generation $n$, $n\geq 0$, and for any $v\in\SSS$ we denote the type of $v$ by $\sigma(v)\in\TT_\theta$.

The law of $\xi_v$ can be described as follows.  Given the type of an individual $v$, 
 its point process of reproduction is independent of the lives of the individuals that do not stem from $v$. 
A point $(t, r)$ of $\xi_v$ corresponds to a secondary type-$r$ case $v\in V_\CC$ for which the time elapsed since the corresponding primary case $u$ contracted the disease is $d_\CC(u,v)= t$. 
Each individual $v\in\SSS$ is assigned an infectious period $I_v$; given the type $\sigma(v)$ of $v$, $I_v$ is an independent copy of the generic infectious period if either $\sigma(v)=\theta$ or $v=a_0$ (i.e. $v$ is the first ancestor) , and $I_v=\sigma(v)$ otherwise. For $a_0$, the points of $\xi_{a_0}$ corresponds to the secondary cases where the corresponding primary case has infectious period $I_{a_0}$ and is a member of $  M\!P(A)$ cliques of independent sizes with distribution $M\!P(\bar B)$. Similarly, if  $v\neq a_0$ the points of $\xi_{v}$ corresponds to the secondary cases where the corresponding primary case has infectious period $I_{v}$ and is a member of $  M\!P(\bar A)$ cliques, each of size $M\!P(\bar B)$. The number $\abs{Z_t}$ of individuals of $Z$ that are alive at time $t\geq 0$,  correponds to the number of infectious individuals at $t$ and is given by 
is the cardinality of the set 
\begin{equation}\label{eq:Z_abs}
\{u\in\SSS: \tau_u\leq t < \tau_u+I_u \}.
\end{equation}

Before proceeding, we introduce some additional  notation. The individuals of $\Zth$ can be partially ordered by descent; we write $x\dleq y$ (or equivalently $y\dgeq x$) to indicate that $x$ is an ancestor of $y$ (we make the convention that an individual is an ancestor of itself) and $x\dl y$ (or $y\dg x$)  to indicate that $x\dleq y $ and $x\neq y$.  
Similarly, for $\mathcal J\subset \mathcal S$ and $x\in\mathcal S$ we write $ \mathcal J\dl x$ to indicate that $y\dl x$ for some $y\in\mathcal J$, and $ \mathcal J\dleq x$ to indicate that $y\dleq x$ for some $y\in\mathcal J$.

To arrive at Theorem \ref{main1}, we embed a single type branching process $\Yth$ into the above described branching process $\Zth$. 
To this end, we partition the individuals of $\Zth$ into \emph{blocks}. 
Let $\mathcal S_\theta\subset \mathcal S$ be the set of the  type-$\theta$ individuals of $\Zth$. 
For $x\in\SSS_\theta$, define the {block} $\mathscr B_x$ as follows: 
\begin{gather}\label{block}
\mathscr B_x:=\{y\in\SSS:x\dleq y, \text{ and whenever } x\dl z\text{ for some } z\in\SSS_\theta \text{ then }  z\not \dleq y \}.\end{gather}
In words, for any $x\in\SSS_\theta$ the block $\mathscr B_x$ is the set of descendants of $x$ for which the line of descent back to $x$ does not contain an individual of type $\theta$. 
The embedded branching process $\Yth$ is then obtained by letting the individuals of $\Yth$ be the individuals of $\mathcal S_\theta$ and the children of $x\in\SSS_\theta$ (seen as an individual of $\Yth$) the type-$\theta$ children of individuals of $\mathscr B_x$ (in $\Zth$).
That is, if we define $\mathscr J_n$, $n\geq 1$,  recursively as follows
$$\mathscr J_0=\{a_0 \}$$
and   
$$\mathscr J_n=\{ x\in \mathcal S_\theta: x\dg\mathscr J_{n-1} \text{ and whenever }   \mathscr J_{n-1}\dl z\dl x \text{ it holds that }  z\not \in \mathcal S_\theta\},$$
then $\mathscr J_n$ consists  of the individuals of generation $n$ of $\Yth$, $n\geq 0$. 

Since we are interested in the number of infected individuals at each time point $t\in\mathbb{R}_{\geq 0}$ we count the population of the embedded single type branching process $\Yth$ with a certain random characteristic (see e.g. \textcite{Nerman1981}) which provides the link between the size $\abs{\Zth_t}$ of the branching process population of $\Zth$ at $t$ and the embedded branching process $\Yth$. 
Here  we consider the special case where the random characteristic $\phi $ is defined as 
\begin{gather}\label{charac}
\phi_x(t)=\abs{\{y\in \mathscr B_x :\tau_y\leq t<\tau_y+I_y\}}
\end{gather}
for each $x\in \mathscr J=\cup_{n\geq 0}\mathscr J_n$
and we say that
$$\Ythphi_t:=\sum_{x\in\mathscr{J}}\phi_x(t-\tau_x)$$
is the branching process population of $Y$ {counted with the characteristic} $\phi$.
In words, $\phi_x(t)$ can be thought of  as the number of infectious individuals which belongs to the block $\mathscr B_x$ at $\tau_x+t$, where $\tau_x$ is the time point when $x$ contracts the disease. 
Thus, the total population size $\abs{ \Zth_t}$ of the approximating branching process $\Zth$ at the time point  $t$ can be recovered from the embedded single-type branching process $Y$  via the relation 
\begin{gather}\label{rel}
{\abs{\Zth_t}}={ \Ythphi_t}
\end{gather}

where $\Ythphi_t=\sum_{x\in\mathscr{J}}\phi_x(t-\tau_x)$
is the branching process population of $Y$ {counted with the characteristic} $\phi$ defined in (\ref{charac}) at $t$.

\begin{remark}
It is worth to point out that the embedding technique employed here does not require the presence of cliques of size 2 in the underlying graph model. Indeed, in a more general setting, an embedding of a single type branching process may be obtained by letting the individuals of the single type branching process be represented by the  vertices that are the last to be infected in their clique. 
This embedding relies on the observation that 
if a clique  $\CC$ (of size $\abs{V_\CC}=d\geq 2$ say) is fully infected then
the 
$d$th individual of $\CC$ to be infected does not compete with the other infected cases of $\CC$ in transmitting the disease to the remaining susceptible individuals of $\CC$. 
Thus, given that $v$ is the $d$th infected case of $\CC$, 
the infectious period of $v$ is independent of 
the actual paths of transmission within $\CC$. 
\end{remark}

\subsection{Branching processes counted with random characteristics}\label{sec:branching processtheory}
This section contains a brief overview of some preliminaries from the theory of branching processes, which we include for completeness.
More detailed accounts of the branching process theory can be found in \textcite{Nerman1981}
 and also in the more recent paper by \textcite{MeinersIksanov}. %
 
We begin by stating an asymptotic result for single-type branching processes where the  type of the ancestor is the same as the type of the other individuals. 
To this end, let  $\tilde Z$ be a branching process that behaves like a copy of $Z$,   
where $Z$ is the branching process in Section  \ref{sec:main1}, except that the first ancestor of $\tilde Z$ is of type $\theta$. Let further $\tilde Y$ be the corresponding embedded single-type branching process. In what remains, we will recycle the notation from section  \ref{sec:main1} for ease of notation. That is, we denote the type space of $\tilde Z $ by $\TT_\theta$, $\SSS$ denotes the space of individuals of $\tilde Z$, the block $\mathscr B_x$ and 
 the random characteristic $\phi$ are 
 analogous to the definitions in
(\ref{charac}) and (\ref{block}), respectively, 
and so forth.

 Let the random measure $\xi$ be the point process of reproduction on $\mathbb{R}_{\geq 0}$ of a generic individual of the single-type branching process $\tilde Y$, and let $ \xi^{(\alpha)}=\int_{\mathbb{R}_{\geq 0}}e^{-\alpha t} \xi(dt) =\sum_{x\in \mathscr J}e^{-\alpha\tau_x}$ where $\mathscr J=\cup_{n\geq 0} \mathscr J_n$ is the space of all individuals of $\tilde Y$ and $\alpha$ is the Malthusian parameter, i.e. $E(\xi^{(\alpha)})=1$. We define the measure $\nu$ on $\mathbb{R}_{\geq 0}$ by $\nu(t)=\nu[0,t]:=E(\xi(t)).$
Theorem \ref{thrm:asympt} stated below is a special case of  \textcite[Theorem 5.4]{Nerman1981} and  will lead us to the  a.s. convergence of Theorem \ref{main1}.  In order to  state Theorem \ref{thrm:asympt} we  need the following conditions.

\begin{condition}[Finite mean age at childbearing]\label{condbeta}
The mean age at childbearing $\beta$ defined by 
$$\beta:= E\p{ \int_{\mathbb{R}_{\geq 0}}te^{-\alpha t} \xi(dt)  }$$
is finite. 
\end{condition}

\begin{condition}[$x\log x$]\label{cond_xlogx} The random variable 

$$\xi^{(\alpha)}\log_+\p{\xi^{(\alpha)}}$$
has finite expectation.
\end{condition}

\begin{condition}
\label{cond1}

There exists some non-negative real-valued  non-increasing integrable function $g$ such that 
$$\int_{\mathbb{R}_{\geq 0}}\frac{1}{g(t)}e^{-\alpha t} \nu(dt)<\infty.$$
\end{condition}

\begin{condition}
\label{cond2}
There exists some non-negative real-valued  non-increasing integrable function $g$ such that the expectation of $$\sup_{t\geq 0}\frac{1}{g(t)\wedge 1}e^{-\alpha t} \phi(t)$$
is finite.

\end{condition}

\begin{theorem}[Theorem 5.4, \textcite{Nerman1981}]\label{thrm:asympt}

Under conditions \ref{condbeta} to \ref{cond2} above it holds almost surely  that $$\frac{\abs{\tilde Y_t^\phi}}{e^{\alpha t}}\to \widehat W m_\infty$$   as $t\to\infty$, where the random variable $\widehat W$ has mean $E(\widehat W)=1$ and $P(\{\widehat W=0\})=P( \abs{\Zth_t}\to 0 )$, and $m_\infty\in (0,\infty)$ is a constant that depends on $\phi$. 
\end{theorem}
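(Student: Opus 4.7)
The plan is to follow Nerman's martingale strategy. I would first introduce the intrinsic martingale
\[
W_n := \sum_{x \in \mathscr{J}_n} e^{-\alpha \tau_x},
\]
which, by the very definition of the Malthusian parameter $E(\xi^{(\alpha)})=1$, is a non-negative martingale with respect to the filtration generated by the first $n$ generations of $\tilde Y$. Doob's theorem yields an almost sure limit $\widehat W \geq 0$ with $E(\widehat W)\leq 1$. The $x\log x$ condition (Condition \ref{cond_xlogx}) upgrades this to $L^1$-convergence by a Biggins--Kesten--Stigum-style truncation of $\xi^{(\alpha)}$, so that $E(\widehat W)=1$. The branching property then forces $\{\widehat W=0\}$ to coincide almost surely with the extinction event of $\tilde Z$, delivering the identification $P(\widehat W=0)=P(\abs{\tilde Z_t}\to 0)$.

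Next I would translate the discrete martingale convergence into the continuous-time statement. The starting point is the decomposition
\[
e^{-\alpha t}\,\tilde Y_t^\phi \;=\; \sum_{x\in\mathscr{J}} e^{-\alpha \tau_x}\, h(t-\tau_x), \qquad h(u):=e^{-\alpha u}\phi(u),
\]
together with the fact that the tilted intensity $\nu^{(\alpha)}(dt):=e^{-\alpha t}\nu(dt)$ is a probability measure, by the Euler--Lotka relation $\int e^{-\alpha t}\nu(dt)=1$. The random point measure $\sum_{x}e^{-\alpha\tau_x}\delta_{\tau_x}$ therefore behaves asymptotically like $\widehat W$ times a renewal measure whose inter-arrival mean is $\beta<\infty$ (Condition \ref{condbeta}). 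Applying the key renewal theorem to this tilted measure with directly Riemann-integrable driving function $u\mapsto E(h(u))$ yields the limit constant
\[
m_\infty \;=\; \frac{1}{\beta}\int_0^\infty e^{-\alpha u} E(\phi(u))\,du,
\]
finite and strictly positive under Conditions \ref{cond1}--\ref{cond2}.

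The main obstacle will be promoting the convergence from \emph{in mean} to \emph{almost sure}. Nerman's device is to split the sum over $\mathscr{J}$ at a growing threshold $T$: the contribution of ``old'' individuals with $t-\tau_x\geq T$ is handled by dominated convergence against the integrable tail $\int_T^\infty g(u)^{-1}e^{-\alpha u}\nu(du)$ from Condition \ref{cond1}, while the ``young'' contribution with $t-\tau_x<T$ is controlled uniformly in $t$ via the dominating variable $\sup_{u\geq 0}(g(u)\wedge 1)^{-1}e^{-\alpha u}\phi(u)$, whose finite expectation is precisely the content of Condition \ref{cond2}. Coupling these two bounds with the almost sure convergence $W_n\to\widehat W$ along stopping lines, and exploiting the monotonicity furnished by the non-increasing envelope $g$, one obtains the desired uniform passage to the limit and hence the stated almost sure convergence $\abs{\tilde Y_t^\phi}/e^{\alpha t}\to \widehat W\, m_\infty$.
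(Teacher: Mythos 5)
There is nothing in the paper to compare your argument against: Theorem \ref{thrm:asympt} is not proved in the paper at all, but imported verbatim as a special case of Theorem 5.4 of \textcite{Nerman1981} (the paper's own work only consists in verifying Conditions \ref{condbeta}--\ref{cond2} for the embedded process $\tilde Y$, which is done later in Lemmas \ref{claim00}--\ref{lemmacond34}). Your outline is essentially a faithful reconstruction of Nerman's original proof: the intrinsic martingale $\sum_{x\in\mathscr J_n}e^{-\alpha\tau_x}$, the $x\log x$ condition upgrading $E(\widehat W)\leq 1$ to $E(\widehat W)=1$ \`a la Kesten--Stigum, the renewal-theoretic identification $m_\infty=\beta^{-1}\int_0^\infty e^{-\alpha u}E(\phi(u))\,du$, and the splitting of $\mathscr J$ into ``old'' and ``young'' individuals controlled by Conditions \ref{cond1} and \ref{cond2}. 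Two points in your sketch are stated more easily than they are proved and deserve acknowledgement: the identification $P(\widehat W=0)=P(\abs{\tilde Z_t}\to 0)$ does not follow from the branching property alone but also uses $E(\widehat W)=1$ (mean one rules out the trivial root of the fixed-point equation satisfied by $P(\widehat W=0)$), and the passage from convergence in mean to almost sure convergence is the technical heart of Nerman's paper --- it is carried out via the ``coming generation'' process and an approximation of the characteristic over a lattice of times, not merely by dominated convergence plus monotonicity of $g$. As a blind outline of a deep cited theorem this is sound, but it is a proof sketch rather than a complete proof, which is consistent with the paper's decision simply to cite the result.
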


\begin{remark}\label{remark}
Under the conditions of Theorem \ref{thrm:asympt}, applying Theorem \ref{thrm:asympt} to each of the children of the first ancestor of $Z$ gives that 

\begin{equation}\label{eq:remark}
\frac{\abs{\Zth_t}}{e^{ \alpha t}}=\frac{\abs{ Y_t^\phi}}{e^{\alpha t}}\to  W:=(\widehat W^{(1)}e^{-\alpha \tau_1}+\ldots+\widehat W^{(J)}e^{-\alpha \tau_J}) m_\infty
\end{equation}
almost surely as $t\to\infty$, where $J$ is the number of children of the first ancestor of $Z$ that are born in $[0,\infty)$, the time points  $\tau_1,\ldots, \tau_J$ are the birth times of those children  
and $\widehat W^{(1)},\ldots, \widehat W^{(J)}$ are $J$ copies of $\widehat W$ (which are not independent in general).
\end{remark}


\section{Proofs}\label{sec:proofs}

In Section \ref{seq:proof1}, we prove Theorem  \ref{main1} by showing that there is a coupling of the branching process $Z$ and a single-type branching process whose Malthusian parameter is given in Definition \ref{def:Malt}. In Section \ref{seq:proof2}, we prove Theorem \ref{main2}. The main step in the proof is to establish upper bounds on the total variation distance of the degree distribution in (\ref{eq_po}) and $Po(\bar A)$ and of the distribution in (\ref{eq_poB}) and $Po(\bar B)$.

\subsection{Proof of Theorem \ref{main1}}\label{seq:proof1}

Recall that the random measure $\xi$ (defined in section \ref{sec:branching processtheory}) is the point process of reproduction on $\mathbb{R}_{\geq 0}$ of a generic individual of $\tilde Y$ and that the measure $\nu$ on $\mathbb{R}_{\geq 0}$ is defined as $\nu(t)=\nu[0,t]:=E(\xi(t)).$
Also recall that $\Gamma=(\gamma_k)_k$ is the vector with elements of the form $\gamma_k= \mu_{\bar A}k p_k^{\bar B}$ where $p^{\bar B}_k=P(M\!P(\bar B)=k)$  for $k\in\mathbb{Z}_{\geq 1}$ and that $\Laplvec{\alpha}$ is the vector displayed in (\ref{eq:laplacedef}).

\begin{lemma}\label{claim0}
In the supercritical regime, the Malthusian parameter $\alpha>0$ of $Y$ exists if and only if $P(T_{ij}'=0)<1/(\mu_{\bar A}\mu_{\bar B})$ and is then  the unique solution of $\Gmat\cdot \Laplvec{\alpha}=1$.
\end{lemma}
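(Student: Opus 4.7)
The plan is to translate the Euler--Lotka equation $E(\xi^{(\alpha)})=1$ for the single-type branching process $Y$ into the scalar identity $\Gmat\cdot\Laplvec{\alpha}=1$, and then to analyse that scalar equation as a function of $\alpha$. First I would compute the mean Laplace reproduction of a generic type-$\theta$ individual $x\in\mathscr J$ of $Y$ by decomposing through the block structure. Every child of $x$ in $Y$ is, by construction, a type-$\theta$ child in $Z$ of some $y\in\mathscr B_x$, and each such $\theta$-child arises from a $2$-clique rooted at $y$ and therefore contributes expected Laplace weight $\gamma_1\LL_2^{(\alpha)}$. Factoring out yields
\[
E\bigl(\xi^{(\alpha)}\bigr)\;=\;\gamma_1\LL_2^{(\alpha)}\cdot E\!\left(\sum_{y\in\mathscr B_x}e^{-\alpha(\tau_y-\tau_x)}\right).
\]
Inside $\mathscr B_x$ the dynamics follow the non-$\theta$ reproduction law of $Z$ (cliques of size at least $3$), with mean Laplace per generation $m(\alpha):=\sum_{k\geq 2}\gamma_k\LL_{k+1}^{(\alpha)}$; summing a geometric series over generations of the block gives $E[\sum_{y\in\mathscr B_x}e^{-\alpha(\tau_y-\tau_x)}]=1/(1-m(\alpha))$ whenever $m(\alpha)<1$. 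Clearing denominators in $E(\xi^{(\alpha)})=1$ then rearranges directly to $\gamma_1\LL_2^{(\alpha)}+\sum_{k\geq 2}\gamma_k\LL_{k+1}^{(\alpha)}=\Gmat\cdot\Laplvec{\alpha}=1$.

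Next, I would study the scalar function $\psi(\alpha):=\Gmat\cdot\Laplvec{\alpha}$. Continuity on $[0,\infty)$ is immediate by dominated convergence, and strict monotonicity follows since each $\LL_{k+1}^{(\alpha)}$ is strictly decreasing in $\alpha$ whenever $F_{k+1}$ has mass outside $\{0\}$ (the non-lattice hypothesis on $T_{ij}'$ rules out degenerate concentration at $0$). The supercriticality assumption is precisely $\psi(0)>1$, and by monotone convergence $\psi(\infty):=\lim_{\alpha\to\infty}\psi(\alpha)=\sum_k\gamma_k F_{k+1}(\{0\})$. Existence and uniqueness of a positive root of $\psi(\alpha)=1$ is therefore equivalent to the single condition $\psi(\infty)<1$.

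The remaining step, and the main obstacle, is to relate $\psi(\infty)<1$ to the pointwise hypothesis $P(T'_{ij}=0)<1/(\mu_{\bar A}\mu_{\bar B})$. The necessary direction is easy: $F_{k+1}(\{0\})\geq P(T'_{ij}=0)$ for every $k\geq 1$ because the direct edge inside a clique is itself a within-clique $0$-path, and combined with the identity $\sum_k \gamma_k=\mu_{\bar A}\mu_{\bar B}$ this gives $\psi(\infty)\geq P(T'_{ij}=0)\,\mu_{\bar A}\mu_{\bar B}$. The sufficient direction is more delicate, since for $k+1\geq 3$ the quantity $F_{k+1}(\{0\})$ can strictly exceed $P(T'_{ij}=0)$ through $0$-weight paths via intermediate clique members. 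I would handle this by identifying $\psi(\infty)$ with the mean offspring of the Galton--Watson subprocess of instantaneous descendants of a generic individual of $Z$, and bounding this mean through a union bound over simple directed paths within each clique, invoking the hypothesis on $P(T'_{ij}=0)$ to conclude. Uniqueness of the positive root then follows immediately from the strict monotonicity of $\psi$ established above.
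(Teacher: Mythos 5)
Your first two paragraphs follow essentially the same route as the paper: the paper writes the mean reproduction measure of the embedded process as $\nu=\gamma_1F_2+\sum_r\sum_{(m_1,\ldots,m_r)}\gamma_{m_1}\cdots\gamma_{m_r}F_{m_1+1}*\cdots*F_{m_r+1}*F_2$, takes Laplace transforms, sums the geometric series $\sum_n\p{\Gmat_{\geq 2}\cdot\Laplvec{\alpha}_{\geq 3}}^n$ and rearranges $E(\xi^{(\alpha)})=1$ into $\Gmat\cdot\Laplvec{\alpha}=1$, and then argues existence and uniqueness from continuity and strict monotonicity of $\lambda\mapsto\Gmat\cdot\Laplvec{\lambda}$ together with $\Gmat\cdot\Laplvec{0}>1$ in the supercritical regime. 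Your block/Wald factorisation is the same computation, and that part is fine.

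The gap is in your third paragraph, i.e.\ exactly at the step that settles existence. You correctly note that $\lim_{\lambda\to\infty}\Gmat\cdot\Laplvec{\lambda}=\sum_k\gamma_kF_{k+1}(\{0\})$ and that $F_{k+1}(\{0\})$ may strictly exceed $P(T'_{ij}=0)$ because of zero-weight paths through intermediate clique members, but the proposed repair is only a plan and, as described, cannot close it: a union bound over simple directed zero-weight paths bounds $F_{k+1}(\{0\})$ from \emph{above} by $P(T'_{ij}=0)$ plus strictly positive higher-order terms (whenever $P(T'_{ij}=0)>0$ and cliques of size at least $3$ occur), so it yields an upper bound on $\sum_k\gamma_kF_{k+1}(\{0\})$ that is strictly larger than $P(T'_{ij}=0)\,\mu_{\bar A}\mu_{\bar B}$; from the hypothesis $P(T'_{ij}=0)<1/(\mu_{\bar A}\mu_{\bar B})$ alone you therefore cannot conclude that the limit is below $1$ this way --- indeed, with $P(T'_{ij}=0)\mu_{\bar A}\mu_{\bar B}$ close to $1$ and cliques of size $3$, the quantity $\sum_k\gamma_kF_{k+1}(\{0\})$ itself (not just its union-bound estimate) can exceed $1$, so no upper-bound argument of this type can succeed. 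For comparison, the paper closes this step by simply asserting $\Gmat\cdot\Laplvec{\lambda}\to P(T'_{ij}=0)\mu_{\bar A}\mu_{\bar B}$ as $\lambda\to\infty$, i.e.\ by identifying $F_k(\{0\})$ with $P(T'_{ij}=0)$ --- precisely the identification your scruple calls into question (it is exact when $P(T'_{ij}=0)=0$, in which case both arguments go through). So you have spotted a genuine subtlety that the paper glosses over, but as a standalone proof your proposal is incomplete at that point: the sufficiency direction of the ``if and only if'' is neither proved nor provable by the sketched method.
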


\begin{proof}[Proof of Lemma \ref{claim0}]

Below $*$ denotes convolution, i.e. for two cumulative distribution functions $F$ and $G$ 
$$F*G(t)=\int_{-\infty}^tG(t-s)F(ds).$$
We have

\begin{equation}
  \label{nudef}
  \begin{gathered}
    \nu(t)=\gamma_1F_2(t)+\sum_{r}\sum_{(m_1,\ldots, m_r)} \gamma_{m_1} \gamma_{m_2}\cdots \gamma_{m_r} F_{m_1+1}*\ldots*F_{m_r+1}*F_{2}(t)\gamma_1
  \end{gathered}
\end{equation}

where the sums run over $\mathbb{Z}_{\geq 1}$ and $\mathbb{Z}_{\geq 1}^r$.
Taking the Laplace transform of the right hand side in (\ref{nudef}) and writing this in vector form gives that the Malthusian parameter $\alpha$ is the solution of
\begin{gather}\label{eq:malt1}
\int_{\mathbb{R}_{\geq 0}}e^{-\alpha t}\nu(dt)=\gamma_1\mathcal{L}_{2}(\alpha)\sum_{n=0}^\infty \left(\Gmat_{\geq 2}\cdot \Laplvec{\alpha}_{\geq 3} \right)^n=1\end{gather}

where  $\Gmat_{\geq 2}^T= (\gamma_2,\gamma_3,\ldots)$ and the elements of $\Laplvec{\alpha}_{\geq 3}=(
\mathcal L_{3}^{(\alpha)},
\mathcal L_{4}^{(\alpha)},
\ldots)^T$ are defined in (\ref{lapltransf}).
Since $\Gmat_{\geq 2}\cdot \Laplvec{\alpha}_{\geq 3}<1$ (shown below), (\ref{eq:malt1}) reduces to 

$$\frac{\gamma_1\mathcal{L}_{2}(\alpha)}{1- \Gmat_{\geq 2}\cdot \Laplvec{\alpha}_{\geq 3} }=1.$$

That is,  
 $\Gmat \cdot \Laplvec{\alpha}=1. 
$

It remains to show that $\Gmat_{\geq 2}\cdot \Laplvec{\alpha}_{\geq 3}<1$. By the $x^2\log x$ assumption, $$\Gmat\cdot \Laplvec{\lambda} \leq \Gmat \cdot (1,1,\ldots, 1)^T<\infty$$ for all $\lambda\geq 0$, and since the approximating branching process is supercritical $\Gmat\cdot L^{(0)}>1$. 
Since $\Gmat\cdot \Laplvec{\lambda}$ is continuous and strictly decreasing in $\lambda$ with
$$\Gmat\cdot \Laplvec{\lambda}\to P(T_{ij}'=0)\mu_{\bar A}\mu_{\bar B}<1$$ as $\lambda\to\infty$,
 the Malthusian parameter exists and is unique.  
\end{proof}

To proceed  we  need some additional notation and terminology. 
For two kernels  $K_1$ and $K_2$ (defined on the same measurable space $(E, \mathcal E)$), we define the convolution kernel $K_1 K_2$ as 
$$K_1K_2(r, A):=\int_EK_1(r,ds)K_2(s, A),\ A\in\mathcal E, r\in E.$$ 
For any $m\geq 1$,
$K_1^m:=K_1K_1^{m-1}$ 
and  $K_1^0:=I$ where $I$ is the identity kernel $I(r, A):=\ind(r\in A)$ for any $A \in \mathcal E$.
If $f$ is a ($\mathcal E$-measurable) function on $E$ then we define the function $K_1f$ as  $$K_1f(\cdot):=\int_Ef(s)K_1(\cdot,ds ),$$
and similarly for a measure $\eta$ on $(E, \mathcal E)$ we define $\eta K(\cdot )=\int \eta(ds)K(s, \cdot)$. For any $A\in\mathcal E$, let $I_A$ be the kernel $I_A(r, B)=I(r, A\cap B). $

Recall that we denote (a generic copy of) the point process of reproduction on $\mathcal T_\theta\times\mathbb{R}_{\geq 0}$ of a type-$r$ individual  of $\tilde Z$ by $\xi_r$, and let $\mu(r,A\times B)=E(\xi_r(A\times B))$ be the expected number of offspring of a type in $A\subset \TT_\theta$ produced by a type-$r$ individual (born at time 0) in  $B\subset \mathbb{R}_{\geq 0}$.
For $\lambda\in\mathbb{R}$, define the kernel
$K_{(\lambda)}(r, ds\times dt):=e^{-\lambda t}\mu(r,ds\times dt).$
Let further  
\begin{gather}\label{eq:Khat}
\widehat K (r, ds):=\int_{\mathbb{R}_{\geq 0}}K_{(\alpha)}(r, ds\times dt),
\end{gather} 
 and let $$G_\theta =\sum_{n=0}^\infty ( I_{\{\theta \}^c}\widehat K)^n$$
be the potential kernel of $I_{\{\theta \}^c}\widehat K$. Here $\{\theta\}^c=\mathcal T_\theta\setminus\{\theta\}$ and   \begin{equation}\label{eq:I_op}
    I_{\{\theta\}^c}(r, B)=I(r, B\cap \mathcal T_\theta\setminus \{\theta\}).
\end{equation}
\begin{remark}
Note that for any $A\subset \TT$ and $s\in\TT_\theta$, 
$G_\theta(s, A)$ is the expected number of descendants of an individual $u$ of $\tilde Z$, $\sigma(u)=s$, that are members of the same block as $u$ and whose type belongs to $A$ 
discounted by their time of birth. Similarly, $G_\theta(s, \{\theta\})$ is the expected number of type $\theta$-individuals stemming from $u$ whose mother belongs to the same block as $u$ 
discounted by their time of birth.
\end{remark}

Define  the function $h$ by 
\begin{gather}\label{eq:h}
h(x)=G_\theta(x, \{\theta \})\end{gather}
and the measure $\pi$ by 
\begin{gather}\label{eq:pi}
\pi(A) = \widehat K G_\theta (\theta,A).
\end{gather}
Then  $h$ is harmonic for $\widehat K$ (see \textcite[Proposition 4.6]{nummelin}), i.e. $ \widehat K h=h$.  
Similarly, $\pi$ is invariant for $ \widehat K$, i.e. 
$\pi=\pi \widehat K$.

In order to use the finite-type-branching-process toolbox, we need to verify that the mean age at childbearing $\beta$ of 
$Y$ is finite, i.e. that
$\beta=\int te^{-\alpha t} \nu(dt)<\infty$ where $\nu$ is the measure in (\ref{nudef}). 
\begin{lemma}\label{claim00}
$0<\beta<\infty$.
\end{lemma}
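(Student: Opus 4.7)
\textbf{Plan for Lemma \ref{claim00}.} The quantity $\beta$ is, up to sign, the derivative at $\lambda=\alpha$ of the Laplace transform
$$\psi(\lambda) := \int_{\mathbb{R}_{\geq 0}}e^{-\lambda t}\,\nu(dt).$$
Since this Laplace transform admits the closed form displayed in (\ref{eq:malt1}), namely
$$\psi(\lambda)=\frac{\gamma_1\mathcal L_2^{(\lambda)}}{1-\Gamma_{\geq 2}\cdot L^{(\lambda)}_{\geq 3}},$$
I would obtain $\beta = -\psi'(\alpha)$ by differentiating the right hand side under the integral and under the sum, and then bound the resulting expression.

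\emph{Finiteness.} The derivative $-\psi'(\alpha)$ involves the quantities $\int t e^{-\alpha t}F_k(dt)$ for $k\geq 2$ (coming from differentiating the $\mathcal L_k^{(\alpha)}$) together with the constants $\gamma_k=\mu_{\bar A}k p_k^{\bar B}$. The elementary bound $t e^{-\alpha t}\leq 1/(e\alpha)$ for $t\geq 0$, valid because $\alpha>0$, yields
$$\int_{\mathbb{R}_{\geq 0}}t e^{-\alpha t}F_k(dt)\leq \frac{1}{e\alpha}\qquad\text{uniformly in }k.$$
Hence each series of the form $\sum_{k\geq 2}\gamma_k\int t e^{-\alpha t}F_k(dt)$ that shows up after differentiation is dominated by $\mu_{\bar A}\mu_{\bar B}/(e\alpha)$, where $\mu_{\bar B}=E(\bar B)=E(B^2)/\mu_B<\infty$ by Assumption~\ref{assumMPrandom intersection graph}\ref{assumMPrandom intersection graphiii} and similarly $\mu_{\bar A}<\infty$. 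The geometric factor $(1-\Gamma_{\geq 2}\cdot L^{(\alpha)}_{\geq 3})^{-1}$ appearing when one differentiates the denominator is finite because $\Gamma_{\geq 2}\cdot L^{(\alpha)}_{\geq 3}<1$ was already established in the proof of Lemma \ref{claim0}. Combining these two bounds gives $\beta<\infty$.

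\emph{Positivity.} This follows from the fact that $\nu$ puts positive mass away from $0$. By Assumption~\ref{assum:nonexpl} we have $P(T'_{ij}=0)<1$, so $F_2$ is not concentrated at $0$; hence $\int t e^{-\alpha t}F_2(dt)>0$, which already forces $-\psi'(\alpha)>0$ since every other term in the derivative of $\psi$ contributes non-negatively.

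The only genuinely routine technicality is justifying the interchange of differentiation with the infinite sum/integral in $\psi$; this I would handle by dominated convergence, using the uniform bound above together with the finiteness of $\sum_{k}\gamma_k$, so I do not expect a real obstacle here.
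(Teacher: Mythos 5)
Your proposal is correct in substance, but it takes a slightly different route than the paper, and the comparison is instructive. Both arguments rest on the same two ingredients: the chain/convolution representation of $\nu$ in (\ref{nudef}) with its closed-form Laplace transform (\ref{eq:malt1}), and the strict inequality $\Gmat_{\geq 2}\cdot \Laplvec{\alpha}_{\geq 3}<1$ established in the proof of Lemma \ref{claim0}. The paper avoids differentiation altogether: it absorbs the factor $t$ into a slightly reduced exponential rate via $te^{-\alpha t}\leq C_\varepsilon e^{-(\alpha-\varepsilon)t}$, picks $\varepsilon$ small enough (by continuity and strictness at $\alpha$) that $\Gmat_{\geq 2}\cdot \Laplvec{\alpha-\varepsilon}_{\geq 3}<1$, and then simply evaluates the closed form at $\alpha-\varepsilon$; this is a two-line proof with no interchange of limits to justify beyond what Lemma \ref{claim0} already gives. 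You instead stay at $\lambda=\alpha$ and differentiate the closed form, paying the loss $1/(e\alpha)$ only on the differentiated factor; this is equally valid but requires the term-by-term bookkeeping you flag at the end.

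Two points of care in your version. First, the interchange is better justified by Tonelli than by dominated convergence: expand $\beta=\int te^{-\alpha t}\nu(dt)$ over the chains in (\ref{nudef}), use the product rule for first moments of convolutions, and apply $te^{-\alpha t}\leq 1/(e\alpha)$ \emph{only} to the single differentiated factor in each convolution, letting the remaining factors keep their Laplace transforms and produce the geometric series in $\Gmat_{\geq 2}\cdot \Laplvec{\alpha}_{\geq 3}<1$. Applying the uniform bound to the whole integrand would give $\beta\leq \nu[0,\infty)/(e\alpha)$, which is useless since $\nu$ may have infinite total mass ($\Gmat_{\geq 2}\cdot \Laplvec{0}_{\geq 3}$ need not be below $1$). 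Your sketch does place the bound in the right spot, so this is a matter of phrasing; alternatively, the paper's observation that the transform is finite on $(\alpha-\varepsilon,\infty)$ makes it analytic at $\alpha$, so $\beta=-\psi'(\alpha)$ comes for free. Second, for positivity, $P(T'_{ij}=0)<1$ alone does not exclude $T'_{ij}\in\{0,\infty\}$ almost surely, in which case $F_2$ has no mass in $(0,\infty)$; you should invoke the non-lattice part of Assumption \ref{assum:nonexpl} (which forces $P(0<T'_{ij}<\infty)>0$), together with $\gamma_1=\mu_{\bar A}E\bigl(\bar Be^{-\bar B}\bigr)>0$. This is a minor repair, and in fact the paper's own proof only establishes $\beta<\infty$ and leaves positivity implicit, so on that point you do more than the paper.
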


\begin{proof}[Proof of Lemma \ref{claim00}]

Let $\varepsilon>0$ be small so that $ \Gmat_{\geq 2}\cdot \Laplvec{\alpha-\varepsilon}_{\geq 3} <1$, and let the constant $C_\varepsilon$ be such that $C_\varepsilon e^{ -(\alpha-\varepsilon)t} \geq t e^{ -\alpha t}$ for all $t\geq 0. $
Then 
$$\beta =\int t e^{ -\alpha t} \nu(dt) \leq  C_\varepsilon \int e^{ -(\alpha-\varepsilon)t} \nu(dt) $$
$$ = C_\varepsilon \gamma_1\mathcal{L}_{2}(\alpha-\varepsilon)\sum_{n=0}^\infty \left(\Gmat_{\geq 2}\cdot \Laplvec{\alpha-\varepsilon}_{\geq 3} \right)^n<\infty .$$
\end{proof}

\subsubsection[Optimal lines and the x log x condition]{Optimal lines and the $x\log x$ condition}

In this paper, we consider two ways of dividing the individuals of the approximating branching process $\tilde Z$
into generations; either generation $n$ consists of the individuals of  $\mathcal S_n:=\{x\in\mathcal S:\abs{x}=n\}$ (i.e. of the individuals separated from the first ancestor by a line of descent of length $n$), or generation $n$ consists of the individuals of $\mathscr J_n$ which leads us to the embedded branching process $\tilde Y$. There is a close connection between these two ways of viewing generations and the concept of \emph{ stopping lines} (see \textcite{Jagers1989} or \textcite{biggins2004}).

Following \textcite{Jagers1989}, we say that a set of individuals $L\subset \mathcal S$ is a \emph{stopping line } if for any pair $y,x\in L$ it holds $x\not \dl y$. In other words, a stopping line $L$ cuts across the branching process tree $Z$ in the sense that if
$x\in L$ then no descendants or ancestors of $x$ (apart from the individual $x$ itself) are members of $L$. 
For any $x\in \mathcal S$, let $\mathcal G_x$ be the $\sigma$-algebra generated by the lives (infectious periods and reproduction processes) of the ancestors of $x$ (including  $x$), 
 and for any non-random stopping line $\ell$, let $\mathcal G_\ell:=\sigma(\cup_{x\not\dgeq\ell} \mathcal G_x)$ be the $\sigma$-algebra generated by the lives of the individuals 
that do not have an ancestor in $\ell$. 
Mirroring the concept of optimal stopping times, we say that a line $L$ is \emph{optimal} if for any non-random stopping  line $\ell $ the event $\{ L\dleq \ell\}$ belongs to $\mathcal G_\ell$.
Here $ L\dleq \ell$ means that for any $x\in L$ there is $y\in \ell$ such that $x\dleq y $.

 Note that  $\mathscr J_n$
is an optimal line for each $n\geq 0$. Note also that $\mathcal S_n$ is an optimal line, $n\geq 0$.

For each $n\in\mathbb{Z}_{\geq 0}$ define
\begin{gather}\label{What}
 \widehat W_n:=\sum_{x\in\mathscr J_n} e^{-\alpha \tau_x}. 
\end{gather}
and  
\begin{gather}\label{W}
\Wtilde_n=\frac{1}{h(\theta)}\sum_{\abs u=n}e^{-\alpha \tau_u}h(\sigma_u)
\end{gather}
where the sums run over all individuals of generation $n$ of $\tilde Y$ and $\tilde Z$, respectively. 
It is well known \parencite{biggins2004} that $\{\Wtilde_n \}_{n\in \mathbb{Z}_{\geq 0} }$ is a martingale with respect to $\mathcal F:=\{\mathcal F_n \}_n$, where 
$\mathcal F_n:=\mathcal G_{\mathcal S_n}$ is generated by the lives of the individuals up to generation $n$ (of $\tilde Z$) for $n\geq 1$ (we make the convention that $\mathcal G_{\varnothing}$ is the trivial $\sigma$-algebra).  
Similarly, $\{ \widehat W_n \}_{n\in \mathbb{Z}_{\geq 0} }$ is a martingale with respect to $\{\mathcal G_{\mathscr J_n} \}_n$. 
 For later reference, we now state a special case of results presented in  \textcite{ biggins2004}.

\begin{proposition}[c.f. \textcite{ biggins2004}, Theorem 6.1 and Lemma 6.2]\label{Wlim}
Let $\{\widehat W_n\}_n$ and $\{ \Wtilde_n\}_n$ be as in (\ref{What}) and (\ref{W}). Then, with probability 1, the limits $\lim_n \Wtilde_n$ and $\lim_n \widehat  W_n$ exist 
 and
$$\lim_n \Wtilde_n=\lim_n \widehat  W_n. $$

\end{proposition}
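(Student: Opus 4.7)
The plan is to recognise both sequences as instances of a single multiplicative martingale indexed by optimal lines of $\tilde Z$, and then transfer between them via the optional-sampling property for stopping lines. For any optimal line $L\subset \SSS$, I would put
\[
\Wtilde_L := \frac{1}{h(\theta)}\sum_{x\in L}e^{-\alpha\tau_x}h(\sigma(x)),
\]
so that $\Wtilde_{\mathcal S_n} = \Wtilde_n$. Since every member of $\mathscr J_n$ has type $\theta$ (this is essentially the definition of the embedding), $h(\sigma(x))=h(\theta)$ on $\mathscr J_n$, so $\Wtilde_{\mathscr J_n}=\widehat W_n$. The two martingales of the statement are thus the same object evaluated at two different optimal lines.

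First I would verify the multiplicative-martingale property $E[\Wtilde_{L'}\mid \mathcal G_L]=\Wtilde_L$ for any pair of optimal lines $L\dleq L'$. By optimality, conditioning on $\mathcal G_L$ gives that the future reproduction processes of the $x\in L$ are independent copies of $\xi_{\sigma(x)}$. A Campbell-type computation then reduces the conditional expectation to
\[
\sum_{x\in L}e^{-\alpha\tau_x}\widehat K^{\,k}h(\sigma(x)),
\]
which equals $\sum_{x\in L}e^{-\alpha\tau_x}h(\sigma(x))$ by the harmonicity $\widehat K h=h$ stated after (\ref{eq:pi}). Specialised to $L=\mathcal S_n$, $L'=\mathcal S_{n+1}$ and to $L=\mathscr J_n$, $L'=\mathscr J_{n+1}$ this yields that $\{\Wtilde_n\}$ and $\{\widehat W_n\}$ are non-negative martingales in their respective filtrations, so Doob's theorem gives the a.s. existence of the limits $\Wtilde_\infty$ and $\widehat W_\infty$.

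For the equality of the limits I would exploit the branching structure at $\mathscr J_n$. Each $x\in\mathscr J_n$ has type $\theta$, so the subtree of $\tilde Z$ rooted at $x$ is an independent copy of $\tilde Z$ itself. Applying the a.s. convergence to each such subtree and summing gives, on each of these subtrees,
\[
\frac{1}{h(\theta)}\sum_{\substack{\abs{u}=n+m\\ u\dgeq x}}e^{-\alpha\tau_u}h(\sigma(u))\xrightarrow[m\to\infty]{\mathrm{a.s.}} e^{-\alpha\tau_x}\Wtilde^{(x)}_{\infty},
\]
with $\Wtilde^{(x)}_\infty$ distributed as $\Wtilde_\infty$. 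Since every individual of generation $n+m$ of $\tilde Z$ with $\tau_u<\infty$ sits below some $x\in\mathscr J_n$ on the survival event (as $\mathscr J_n$ is an optimal line partitioning the realized tree into the blocks $\mathscr B_x$), summing the above display over $x\in\mathscr J_n$ identifies
\[
\Wtilde_\infty=\sum_{x\in\mathscr J_n}e^{-\alpha\tau_x}\Wtilde^{(x)}_\infty\quad\text{a.s.}
\]
Running exactly the same argument for the single-type process $\tilde Y$ gives $\widehat W_\infty=\sum_{x\in\mathscr J_n}e^{-\alpha\tau_x}\widehat W^{(x)}_\infty$. Finally, I would use that within the subtree rooted at a type-$\theta$ individual $x$, the two martingales coincide at $n=0$ (both equal $1$), and proceed by a backward-in-$n$ argument plus the a.s.\ convergence on subtrees to conclude $\Wtilde^{(x)}_\infty=\widehat W^{(x)}_\infty$ a.s., and hence $\Wtilde_\infty=\widehat W_\infty$ a.s.

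The main obstacle will be handling the fact that $\mathscr J_n$ contains individuals sitting at very different generations of $\tilde Z$, so care is required to show that summing the subtree limits over $x\in \mathscr J_n$ really recovers $\Wtilde_\infty$; this requires uniform integrability-type control of the tails of $\sum_{x\in\mathscr J_n} e^{-\alpha \tau_x}(\Wtilde^{(x)}_m-\Wtilde^{(x)}_\infty)$, which (in the supercritical regime) is provided by the $x\log x$ condition verified in Lemma \ref{claim00} and subsequent estimates, together with the Campbell formula used above. Modulo this technical verification, the result is essentially a restatement, in our branching-process setup, of Theorem 6.1 and Lemma 6.2 of Biggins (2004).
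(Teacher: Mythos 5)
Your first half is fine and is essentially the standard route: viewing $\Wtilde_L=\frac{1}{h(\theta)}\sum_{x\in L}e^{-\alpha\tau_x}h(\sigma(x))$ as a single object indexed by optimal lines, noting $\Wtilde_{\mathcal S_n}=\Wtilde_n$ and $\Wtilde_{\mathscr J_n}=\widehat W_n$ (all members of $\mathscr J_n$ have type $\theta$), and getting the martingale property from harmonicity $\widehat K h=h$ plus Doob for a.s.\ existence of both limits. Note, however, that the paper does not prove this proposition at all: it imports it directly from Biggins and Kyprianou (2004), Theorem 6.1 and Lemma 6.2, and the hard content of those results is precisely the identification $\lim_n\Wtilde_n=\lim_n\widehat W_n$ — which is the part your sketch does not actually establish.

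Concretely, three gaps. First, the closing step is circular: $\Wtilde^{(x)}_\infty=\widehat W^{(x)}_\infty$ for the subtree rooted at a type-$\theta$ individual $x$ is exactly the proposition applied to a copy of $\tilde Z$, and ``both martingales equal $1$ at $n=0$'' plus a ``backward-in-$n$ argument'' cannot deliver it — any two mean-one martingales agree at time $0$, and the recursions $\Wtilde_\infty=\sum_{x\in\mathscr J_n}e^{-\alpha\tau_x}\Wtilde^{(x)}_\infty$, $\widehat W_\infty=\sum_{x\in\mathscr J_n}e^{-\alpha\tau_x}\widehat W^{(x)}_\infty$ only say that both limits solve the same smoothing-type fixed-point equation, which identifies at best a distributional property, not a.s.\ equality. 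Second, the decomposition itself is not justified as stated: it is not true that every individual of generation $n+m$ eventually sits below $\mathscr J_n$, because a block $\mathscr B_y$ with $y\in\mathscr J_k$, $k\le n$, may be infinite — the paper only shows the $\alpha$-discounted within-block mean $\Gmat_{\geq 2}\cdot \Laplvec{\alpha}_{\geq 3}<1$ (Lemma \ref{claim0}), not that the undiscounted within-block process is subcritical — so you must additionally show that the discounted contribution of generation-$(n+m)$ individuals not below $\mathscr J_n$ vanishes a.s.\ as $m\to\infty$. Third, the uniform-integrability control you invoke is both miscited (Lemma \ref{claim00} is finiteness of $\beta$; the $x\log x$ property is Lemma \ref{Wxlogx}) and circular relative to the paper's logic, since Lemma \ref{Wxlogx} is proved \emph{using} Proposition \ref{Wlim}; moreover the proposition is a pure a.s.\ statement that must also hold when the martingales are not uniformly integrable (in which case both limits may vanish), so no proof of it should lean on UI. As it stands, the argument reproves the easy part and leaves the cited part open.
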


Now, the  $x\log x$ condition for the single-type branching process $\tilde Y$ takes the form 
\begin{gather}\label{xlogx_ineq}
E( \widehat W_1\log_+ \widehat W_1)<\infty. 
\end{gather}

The following two lemmas assert that $\tilde Y$ satisfies the $x\log x$ condition under the $x^2\log x$-condition. 
\begin{lemma}\label{claim1}
 Let $J\sim M\!P(X)$, where $X$ is a non-negative integrable random variable with $P(X=0)<1$. Then $E(X^2\log_+X)<\infty$ is necessary and sufficient for $E(\bar J\log_+\bar J)<\infty$ to hold. 
\end{lemma}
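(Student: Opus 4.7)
The plan is to first reduce the claim to one about ordinary (non-size-biased) moments. By definition of the size-biased distribution,
\begin{equation*}
E(\bar J \log_+ \bar J) = \frac{E(J^2 \log_+ J)}{E(J)},
\end{equation*}
and since $E(J) = E(X) \in (0,\infty)$ by hypothesis, finiteness of $E(\bar J \log_+ \bar J)$ is equivalent to finiteness of $E(J^2 \log_+ J)$. It therefore suffices to show that $E(X^2 \log_+ X) < \infty$ if and only if $E(J^2 \log_+ J) < \infty$ for $J \sim M\!P(X)$.

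The easier direction (necessity of the $x^2 \log x$ moment) I would handle by a Jensen argument. Extending $\phi(k) := k^2 \log_+ k$ continuously to $[0,\infty)$ yields a convex function: a short derivative calculation shows $\phi''(k) \geq 0$ for $k \geq 1$, while $\phi \equiv 0$ on $[0,1]$ and the right derivative at $k = 1$ dominates the left derivative. Since conditionally on $X = x$ the variable $J$ is $Po(x)$ with mean $x$, Jensen's inequality yields $E(\phi(J) \mid X = x) \geq \phi(x) = x^2 \log_+ x$, and taking expectation over $X$ gives $E(J^2 \log_+ J) \geq E(X^2 \log_+ X)$, which is what is needed.

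The reverse direction is the main technical step and the principal obstacle. I would aim for a pointwise bound of the form
\begin{equation*}
E(J^2 \log_+ J \mid X = x) \leq C \bigl( x^2 \log_+ x + 1 \bigr), \qquad x \geq 0,
\end{equation*}
since then integration against the law of $X$ gives $E(J^2 \log_+ J) \leq C \bigl( E(X^2 \log_+ X) + 1 \bigr) < \infty$. For $x \leq 1$ all Poisson moments are uniformly bounded, so the left-hand side is dominated by a constant. For $x \geq 1$ I would split the expectation at $J = 2x$: on the event $\{J \leq 2x\}$ the crude estimate $J^2 \log_+ J \leq 4x^2 \log_+(2x) \leq C x^2 \log_+ x$ is already the desired bound. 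On $\{J > 2x\}$ the Chernoff inequality for Poisson random variables gives $P(J > 2x \mid X = x) \leq e^{-c x}$ for some $c > 0$; combining this with the polynomial moment bound $E(J^6 \mid X = x) \leq C(x^6 + 1)$ via Cauchy--Schwarz yields
\begin{equation*}
E\bigl(J^2 \log_+ J \cdot \ind(J > 2x) \mid X = x\bigr) \leq \sqrt{E(J^6 \mid X=x)}\,\sqrt{P(J > 2x \mid X = x)} \leq C(x^3 + 1)\, e^{-cx/2},
\end{equation*}
which is uniformly bounded in $x \geq 1$. Putting the two regimes together gives the required pointwise estimate and completes the proof.
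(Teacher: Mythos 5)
Your proposal is correct: the reduction via size-biasing and the necessity direction (Jensen with the convex map $k\mapsto k^2\log_+k$) are exactly what the paper does. Where you genuinely diverge is the sufficiency direction. The paper computes the conditional expectation $E(J^2\log_+J\mid X)$ exactly by index shifts in the Poisson sum, writing $\sum_k k^2\log_+k\,X^ke^{-X}/k! = X\sum_k(k+1)\log_+(k+1)X^ke^{-X}/k!$, splitting $(k+1)=1+k$ and shifting once more, and then applying Jensen to the \emph{concave} functions $x\mapsto\log_+(x+1)$ and $x\mapsto\log_+(x+2)$ to land on the closed-form bound $E\left(X\log_+(X+1)\right)+E\left(X^2\log_+(X+2)\right)$, which is finite under the $x^2\log x$ condition. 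You instead prove a pointwise conditional bound $E(J^2\log_+J\mid X=x)\leq C\left(x^2\log_+x+1\right)$ by truncating at $J=2x$, using the Poisson Chernoff tail $P(J>2x\mid X=x)\leq e^{-cx}$ together with Cauchy--Schwarz and a sixth-moment bound. Both arguments are valid; the paper's is shorter and exploits the Poisson structure exactly, while yours is more robust in that it only uses the conditional mean, polynomial moments and a sub-exponential upper tail, so it would transfer to mixing distributions other than Poisson. One small imprecision: your intermediate inequality $4x^2\log_+(2x)\leq Cx^2\log_+x$ fails for $x$ near $1$ (the right side vanishes at $x=1$), but this is harmless since your target bound carries the additive constant, i.e.\ $4x^2\log_+(2x)\leq C\left(x^2\log_+x+1\right)$ does hold for $x\geq 1$.
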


\begin{proof}[Proof of Lemma \ref{claim1}]
First note that 

$$E(\bar J\log_+\bar J)=E( J^2\log_+ J)/E(J)=E( J^2\log_+ J)/E(X).$$

Since $x\mapsto x^2\log_+ x$  is convex, necessity now   follows from Jensen's inequality for conditional expectations. 
Sufficiency follows from 
$$E( J^2\log_+ J)=E\left( \sum_{k\geq 0} k^2\log_+k\frac{X^ke^{-X}}{k!}\right)$$
$$=E\left( X\sum_{k\geq 0} (k+1)\log_+(k+1)\frac{X^ke^{-X}}{k!}\right)$$
$$=E\left( X\sum_{k\geq 0}\log_+(k+1)\frac{X^ke^{-X}}{k!}\right)+E\left( X\sum_{k\geq 0} k\log_+(k+1)\frac{X^ke^{-X}}{k!}\right)$$
$$=E\left( X\sum_{k\geq 0}\log_+(k+1)\frac{X^ke^{-X}}{k!}\right)+E\left( X^2\sum_{k\geq 0} \log_+(k+2)\frac{X^ke^{-X}}{k!}\right)$$
$$\leq E\left( X\log_+(X+1)\right)+E\left( X^2\log_+(X+2)\right),$$

where the inequality follows from Jensen's inequality applied to the concave (on $\mathbb{R}_{\geq 0}$) functions $x\mapsto \log_+ (x+1)$ and $x\mapsto \log_+ (x+2)$. 
\end{proof}

\begin{lemma}\label{Wxlogx}

Under assumption \ref{assumMPrandom intersection graph} $\{ \widehat W_n\}_n$, satisfies the $x\log x$ condition  in (\ref{xlogx_ineq}). 
\end{lemma}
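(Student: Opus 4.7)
The goal is to show $E(\widehat W_1 \log_+ \widehat W_1) < \infty$, where $\widehat W_1 = \sum_{x \in \mathscr J_1} e^{-\alpha \tau_x}$ is the first-generation Biggins functional of the embedded single-type process $\tilde Y$. My plan is to decompose $\widehat W_1$ along the cliques in which the first ancestor $a_0$ (of type $\theta$) is primary and then iterate a standard random-sum criterion: for $S = \sum_{j=1}^N X_j$ with i.i.d.\ $(X_j)$ independent of $N$ and of finite mean, $E(S \log_+ S) < \infty$ whenever $E(N \log_+ N) + E(X_1 \log_+ X_1) < \infty$.

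Since $a_0$ is primary in $D_0 \sim M\!P(\bar A)$ new cliques whose secondary-case counts $K_1, \ldots, K_{D_0}$ are i.i.d.\ $M\!P(\bar B)$, I write $\widehat W_1 = \sum_{j=1}^{D_0} C_j$, where $C_j = 0$ when $K_j = 0$, $C_j = e^{-\alpha T_j} \leq 1$ when $K_j = 1$ (the unique secondary case in a size-2 clique is of type $\theta$), and $C_j = \sum_v e^{-\alpha \tau_v} \widehat W^{(v)}$ when $K_j \geq 2$, where $v$ ranges over the $K_j$ non-$\theta$ secondary cases and $\widehat W^{(v)}$ is the analogous functional starting from $v$. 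Applying the random-sum criterion with $N = D_0$ --- for which $E(D_0 \log_+ D_0) < \infty$ follows from $E(A^2 \log_+ A) < \infty$ via $E(\bar A \log_+ \bar A) = E(A^2 \log_+ A)/\mu_A$ together with a direct computation in the style of Lemma \ref{claim1} giving $E(J \log_+ J) < \infty$ for $J \sim M\!P(X)$ whenever $E(X \log_+ X) < \infty$ --- reduces the task to $E(C_1 \log_+ C_1) < \infty$. A second application to $C_1 \leq \sum_{v=1}^{K_1} \widehat W^{(v)}$ with $N = K_1$, where $E(K_1 \log_+ K_1) < \infty$ comes analogously from $E(B^2 \log_+ B) < \infty$, reduces the claim further to $E(\widehat W^{(v)} \log_+ \widehat W^{(v)}) < \infty$ for a non-$\theta$ starting type.

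To close the recursion I would go through Proposition \ref{Wlim}. The single-type martingale $\widehat W_n$ and the multitype $h$-weighted Biggins martingale $\Wtilde_n$ share an a.s.\ limit, and both are non-negative martingales with mean $1$, so their $L^1$-convergences are equivalent. The Kesten--Stigum theorem for the single-type CMJ process $\tilde Y$ then says that $E(\widehat W_1 \log_+ \widehat W_1) < \infty$ is equivalent to $L^1$-convergence of $\widehat W_n$, so it suffices to verify the multitype $x \log x$ condition on $\Wtilde_n$. Expanding $\Wtilde_1 = \sum_{|u|=1} e^{-\alpha \tau_u} h(\sigma_u)$ along the same clique decomposition (using $h(\theta) = 1$) and invoking the random-sum criterion a third time, this multitype condition reduces to $E(h(I) \log_+ h(I)) < \infty$.

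The main obstacle is therefore proving that $h$ is bounded on $\TT_\theta$. I plan to exploit the strict inequality $\Gmat_{\geq 2} \cdot \Laplvec{\alpha}_{\geq 3} < 1$ established in the proof of Lemma \ref{claim0}: this says precisely that the non-$\theta$ part of the discounted kernel has spectral radius strictly less than one. Consequently, the Neumann series $G_\theta = \sum_{n \geq 0} (I_{\{\theta\}^c} \widehat K)^n$ converges uniformly, and $h = G_\theta(\cdot, \{\theta\})$ is dominated by a convergent geometric series; $E(h(I) \log_+ h(I)) < \infty$ then follows trivially, completing the proof.
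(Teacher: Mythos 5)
Your first two paragraphs set up a reduction that you then abandon (the quantity $E(\widehat W^{(v)}\log_+\widehat W^{(v)})$ for a non-$\theta$ starting type is an infinite-horizon functional of the same nature as the original problem, so the recursion does not close by random-sum arguments alone), and your third paragraph is where the real work is supposed to happen --- and where the genuine gap lies. Transferring the problem to the multitype martingale $\Wtilde_n$ via the shared a.s.\ limit of Proposition \ref{Wlim}, and using boundedness of $h$, is exactly the paper's strategy. But you then assert that ``the multitype $x\log x$ condition on $\Wtilde_n$'' --- which you take to be $E(\Wtilde_1\log_+\Wtilde_1)<\infty$, reduced via random sums and boundedness of $h$ to $E(h(I)\log_+h(I))<\infty$ --- suffices for $L^1$-convergence (equivalently mean-one limit) of $\Wtilde_n$. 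There is no Kesten--Stigum theorem of that form for a general (here possibly uncountable) type space $\TT_\theta$: the available results (Biggins--Kyprianou 2004; Meiners--Iksanov, which the paper invokes as Corollary 2.1) require, beyond a moment condition, control of the size-biased \emph{spine}. Concretely, the paper must verify, almost surely,
\begin{equation*}
\sup_{x>2}\frac{\sum_i \ind\bigl(H(\varsigma_i)\geq x^{-1}\bigr)}{\log_+(x)}<\infty ,
\qquad H\bigl((r,t)\bigr)=e^{-\alpha t}h(r),
\end{equation*}
for the Markov chain $\varsigma$ of (type, birth time) along the spine. Boundedness of $h$ does not give this: the time coordinate is what matters, i.e.\ one needs that the spine's birth times grow linearly so that only $\OO(\log x)$ spine individuals have discounted weight above $x^{-1}$. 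The paper's proof supplies exactly this missing piece, by showing that the type component of the spine chain returns to the atom $\theta$ after geometrically distributed numbers of steps (using $\Gmat_{\geq 2}\cdot \Laplvec{\alpha}_{\geq 3}<1$) and then applying the law of large numbers to the accumulated birth-time increments between consecutive visits to $\theta$. Your proposal never addresses the spine at all, so the final step is unjustified as written.

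Two smaller remarks. First, your claim that $\Gmat_{\geq 2}\cdot \Laplvec{\alpha}_{\geq 3}<1$ ``says precisely'' that the non-$\theta$ part of $\widehat K$ has spectral radius less than one is not immediate: that scalar is an average over the parent's type, while a type with a very long infectious period produces more discounted offspring than average. Boundedness of $h$ does hold (and is what the paper states), but the argument needs the observation that children's types are i.i.d.\ copies of $I$ independent of the parent's type, so that only the first step of the Neumann series depends on the starting type, together with the crude uniform bound (discount $\leq 1$) $\widehat K(r,\TT_\theta)\leq \mu_{\bar A}(1+\mu_{\bar B})$. Second, the single-type equivalence you attribute to Kesten--Stigum also requires the finite mean age at childbearing, $\beta<\infty$, which is Lemma \ref{claim00}; you should cite it rather than leave it implicit. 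Your random-sum reductions under the $x^2\log x$ assumption (via Lemma \ref{claim1}-type computations) are fine, but they are not where the difficulty of this lemma sits.
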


\begin{proof}
It is  known \parencite[Proposition 4.1]{MeinersIksanov} that the inequality in (\ref{xlogx_ineq}) holds if and only if $\{\widehat  W_n\}_n$ is uniformly integrable, which is also equivalent to $E(\widehat  W)=1$, where $\widehat  W$ is the almost sure limit $\lim_n  \widehat  W_n$.  
By Proposition \ref{Wlim}, $$\lim_n \widehat W_n=\lim_n \Wtilde_n$$
almost surely. Thus, in order to verify that $\{\widehat W_n\}_n$ is uniformly integrable it is sufficient to verify that the almost sure limit of $\Wtilde_n$ has mean 1. 

Now note that $h$ is bounded; for any $x\in \TT_\theta$ we have $$h(x) = G_\theta(x,\{\theta\})\leq 1+ E(\bar A)E(\bar B) \pi(\{\theta \})=1+ E(\bar A)E(\bar B).$$

Combining this with Assumption \ref{assumMPrandom intersection graph}, Lemma \ref{claim1} and  \textcite[Corollary 2.1]{MeinersIksanov}, the $x\log x$ condition holds for $Y$ if we can show that almost surely 
\begin{gather}\label{xlogxineq}
\sup_{x>2}\left(\frac{\sum_i \mathbbm{1} (H(\varsigma_i)\geq x^{-1})}{\log_+(x)}\right)<\infty
\end{gather}

where $\varsigma=(\varsigma_0,\varsigma_1, \ldots)$, $\varsigma_0=(\theta,0)$, is a markov chain on $\mathcal T_\theta\times \mathbb{R}_{\geq 0}$ with transmission 
measure given by

$$R((r ,0), A\times B):=\frac{1}{h(r)}\int_{A\times B} h(s) K(r, ds\times dt)$$

$$R((r ,t), A\times B ):=R((r ,0), A\times (B-t )_{\geq 0})$$

where $B-t=\{b-t:b\in B\}$ 
and 
$H((r, t))=e^{-\alpha t} h(r)$. 

Let $p_1:\mathcal T_\theta\times\mathbb{R}_{\geq 0}\to \mathcal T_\theta$ and  $p_2:\mathcal T_\theta\times\mathbb{R}_{\geq 0}\to \mathbb{R}_{\geq 0}$ be the projections onto the first and second coordinate, respectively, and put $\varsigma_j'=p_1(\varsigma_j)$ and $\varsigma_j''=p_2(\varsigma_j)$ for $j\geq 0$. 
Then $\{\varsigma_0', \varsigma_1',\ldots\}$ is a Markov chain on $\mathcal T_\theta$ with  transition measure $R_1$:
$$R_1(r, B)=\frac{1}{h(r)}\int_B h(s)K(r, ds\times \mathbb{R}_{\geq 0})= \frac{1}{h(r)}\int_B h(s)\widehat K(r, ds)$$
for (measurable) $B\subset \mathcal T_\theta$. 

Now, it is easily verified that $\theta$ is a (positive) recurrent state of $\{\varsigma_0', \varsigma_1',\ldots\}$. Indeed, let $M_i$, $i\geq 1$ be the number of steps until $\{p_1(\varsigma_0), p_1(\varsigma_1), \ldots\}$ revisits $\theta$ for the $i$th time, given that $p_1(\varsigma_0)=\theta$:  
$$M_0:=1$$ 
and for $i\geq 1$
$$M_i=\inf\{m > M_{i-1}:p_1(\varsigma_m)=\theta \}$$  
Then 
for $m\geq 1$ (recall that $I_{\{\theta\}^c}$ is the operator in  \ref{eq:I_op})

$$P(M=m)=R_1(I_{\{\theta^c\}}R_1)^{m-1}(\theta,\{\theta\} )$$
$$=\int_{\mathbb{R}_{\geq 0}^{m-1}}K(r_{m-1},\{\theta\}\times\mathbb{R}_{\geq 0})K(r_{m-2}, dr_{m-1}\times\mathbb{R}_{\geq 0})\ldots K(r_{1}, dr_{2}\times\mathbb{R}_{\geq 0})K(\theta, dr_{1}\times\mathbb{R}_{\geq 0})$$
$$=\gamma_1\mathcal{L}_{2}(\alpha) \left(\Gmat_{\geq 2} \cdot \Laplvec{\alpha}_{\geq 3} \right)^{m-1}=(1-\Gmat_{\geq 2} \cdot \Laplvec{\alpha}_{\geq 3}) \left(\Gmat_{\geq 2} \cdot \Laplvec{\alpha}_{\geq 3} \right)^{m-1}.$$

Thus $P(M=\infty)=0$ and $M$ is geometrically distributed.
Thus the inequality in (\ref{xlogxineq}) holds almost surely by the law of large numbers  applied to  $\{\varsigma_{M_k}''+\ldots \varsigma_{M_{k+1}-1}''\}_k$.
\end{proof}

We now turn our attention to conditions \ref{cond1} and \ref{cond2}. 

\begin{lemma}\label{lemmacond34}

Conditions \ref{cond1} and \ref{cond2} are satisfied under assumption \ref{assumMPrandom intersection graph}. 

\end{lemma}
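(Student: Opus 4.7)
The plan is to take $g(t):=e^{-\varepsilon t}$ for a sufficiently small $\varepsilon>0$. Such a $g$ is strictly positive, non-increasing and integrable on $[0,\infty)$, and $g(t)\wedge 1=e^{-\varepsilon t}$ for $t\geq 0$. With $\tilde\alpha:=\alpha-\varepsilon$, Condition~\ref{cond1} then reduces to the finiteness of the Laplace transform of $\nu$ at $\tilde\alpha$, and Condition~\ref{cond2} to the finiteness of $E(\sup_{t\geq 0}e^{-\tilde\alpha t}\phi(t))$.

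First I would fix $\varepsilon>0$ small enough that $\rho:=\Gmat_{\geq 2}\cdot \Laplvec{\tilde\alpha}_{\geq 3}<1$. This is possible because $\lambda\mapsto \Gmat_{\geq 2}\cdot \Laplvec{\lambda}_{\geq 3}$ is continuous (and finite for every $\lambda\geq 0$ by the $x^2\log x$-condition on $B$), while its value at $\lambda=\alpha$ is already strictly less than $1$, as established in the proof of Lemma~\ref{claim0}.

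For Condition~\ref{cond1}, the Laplace-transform identity derived in the proof of Lemma~\ref{claim0} yields
\begin{equation*}
\int_{\mathbb{R}_{\geq 0}}\frac{e^{-\alpha t}}{g(t)}\nu(dt)=\int_{\mathbb{R}_{\geq 0}}e^{-\tilde\alpha t}\nu(dt)=\frac{\gamma_1\LL_2^{(\tilde\alpha)}}{1-\rho},
\end{equation*}
which is finite because $\gamma_1=\mu_{\bar A}p_1^{\bar B}<\infty$ and $\rho<1$. For Condition~\ref{cond2}, the key observation is the deterministic pointwise bound, for every $t\geq 0$,
\begin{equation*}
e^{-\tilde\alpha t}\phi(t)=\sum_{y\in\Bpsi}e^{-\tilde\alpha t}\ind(\tau_y\leq t<\tau_y+I_y)\leq \sum_{y\in\Bpsi}e^{-\tilde\alpha \tau_y}=:\Bpsi^{(\tilde\alpha)},
\end{equation*}
obtained by replacing $e^{-\tilde\alpha t}$ with the larger quantity $e^{-\tilde\alpha \tau_y}$ on the support $\{\tau_y\leq t\}$ (using $\tilde\alpha>0$). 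Hence $\sup_{t\geq 0}e^{-\tilde\alpha t}\phi(t)\leq \Bpsi^{(\tilde\alpha)}$ pathwise. Every individual of $\tilde Z$ reproduces according to the non-initial-case law, whose restriction to non-$\theta$ offspring has Laplace transform $\Gmat_{\geq 2}\cdot \Laplvec{\lambda}_{\geq 3}$, so the $n$-th generation of $\Bpsi$ contributes $\rho^n$ in expectation to $\Bpsi^{(\tilde\alpha)}$. Summing over $n\geq 0$ yields $E\p{\Bpsi^{(\tilde\alpha)}}=1/(1-\rho)<\infty$, which gives Condition~\ref{cond2}.

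I expect the main point requiring care to be the generation-by-generation expectation of $\Bpsi^{(\tilde\alpha)}$: one must verify that every member of the block (including, in the case of $\tilde Z$, its type-$\theta$ root) reproduces with Laplace-transform-at-$\tilde\alpha$ for non-$\theta$ offspring equal to $\rho$, which is built into the definition of $\tilde Z$. Once this is unwound the two verifications are essentially routine.
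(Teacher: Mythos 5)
Your proposal is correct and takes essentially the same route as the paper: the paper likewise chooses $g(t)=e^{-\varphi t}$ with $\varphi$ small enough that $\Gmat_{\geq 2}\cdot \Laplvec{\alpha-\varphi}_{\geq 3}<1$, verifies Condition \ref{cond1} through the same Laplace-transform/geometric-series identity for $\nu$, and verifies Condition \ref{cond2} via the same pointwise bound $\sup_{t\geq 0}e^{-\alpha t}\phi(t)/g(t)\leq \sum_{y\in\mathscr B}e^{-(\alpha-\varphi)\tau_y}$, whose expectation is summed generation by generation exactly as you do.
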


\begin{proof}
If we take $g$ to be the mapping $t\mapsto e^{-\varphi x}$ then  conditions \ref{cond1} and \ref{cond2} are satisfied if $\varphi>0$ is small enough. 
Indeed, let $\varphi\in (0,\alpha)$ be small so that $\Gmat_{\geq 2}\cdot L^{(\alpha-\varphi)}_{\geq 3} <1$ and put $g(t)=e^{-\varphi t}$.  Then
$$\int_0^\infty \frac{1}{g(t)} e^{-\alpha t }\nu(dt)=\int_0^\infty e^{-(\alpha-\varphi) t }\nu(dt)=\gamma_1\mathcal{L}_{2}^{(\alpha-\varphi)}\sum_{n=0}^\infty \left(\Gmat_{\geq 2}\cdot \Laplvec{\alpha-\varphi}_{\geq 3}  \right)^n<\infty.$$

Below, $x$ is a generic type-$\theta$ individual of $\Zth$ with $\phi=\phi_x$, $\mathscr B=\mathscr B_x$ and $\tau_x=0$. 
Clearly, since $e^{-\alpha t}/g(t)$ is non-increasing in $t$ we have 
$$\frac{e^{-\alpha t }\phi(t)}{g(t)}\leq \sum_{y\in\mathscr B} \frac{e^{-\alpha \tau_y }}{g(\tau_y)}$$

for any $t\geq 0$. 
Thus
$$E\p{\sup_{t\geq 0}\frac{e^{-\alpha t }\phi(t)}{g(t)}}\leq E\p{\sum_{y\in\mathscr B} e^{-(\alpha-\varphi) \tau_y }}=\sum_{n=1}^\infty \left(\Gmat_{\geq 2}\cdot L^{(\alpha-\varphi)}_{\geq 3}\right)^n<\infty.$$
If the random characteristic $\phi$ is as in (\ref{charac}) then condition \ref{cond2} is satisfied for the same choice of $g$.

\end{proof}

\begin{proof}[Proof of Theorem \ref{main1}]

Assume that Assumption \ref{assumMPrandom intersection graph} \ref{assumMPrandom intersection graphi} - \ref{assumMPrandom intersection graphiii} hold.  By  Lemma \ref{claim0}, the Malthusian parameter for the single type branching process $\tilde Y$ is  the unique solution $\alpha>0$  of 
$ \Gmat\cdot \Laplvec{\alpha} =1$.
By Lemma \ref{claim00} Condition \ref{condbeta} holds, by Lemma \ref{Wxlogx} Condition \ref{cond_xlogx} holds and  by Lemma \ref{lemmacond34} Conditions \ref{cond1} and \ref{cond2} hold.  
Hence the conditions of Theorem \ref{thrm:asympt} are satisfied,  and by Remark \ref{remark} the convergence in (\ref{eq:remark}) holds.

\end{proof}

\subsection{Proof of Theorem \ref{main2}}\label{seq:proof2}

Below we describe a probabilistically equivalent construction of $G_n^{\text{aux}}$. This alternative way of constructing $G_n^{\text{aux}}$ is  useful in the branching process approximation of the epidemic process since it allows us to run the epidemic process and construct  $G_n$ 
in unison. 

Throughout this section we denote the probability measure of $A$ by $p$, that is $p([0, x])=P(A\leq x)$ for  $x\in[0,\infty)$. Similarly, we denote the probability measure of the size biased version $\bar A$ of $A$ by $\bar p$. 
Given the weights $A_1,\ldots, A_n$, let $A_{(n)}$ be a random variable which follows the empirical distribution of $A_1,\ldots, A_n$ and let $p_n$ be the corresponding probability measure, i.e. 
\begin{gather}\label{eq:sibi}
p_n([0,x])=P(A_{(n)}\leq x\vert A_1,\ldots, A_n)=\frac{1}{n}\sum_{i=1}^n\ind(A_i\leq x)
\end{gather}
for $x\in[0,\infty)$.
 Let  further $\bar A_{(n)}$ denote the size biased version of $A_{(n)}$ and let $\bar p_n$ be the corresponding probability measure, i.e. 

$$\bar p_n([0, x])=\frac{1}{n\mu_A^{(n)}}\sum_{i=1}^nA_i\ind(A_i\leq x).$$

Similarly, conditioned on the weights $B_1,\ldots, B_m$, let $B_{(n)}$ be a random variable which follows the empirical distribution of $B_1,\ldots, B_m$ and let $\bar B_{(n)}$ be the size-biased version of $B_{(n)}.$

To construct $G_n^{\text{aux}}$, start by picking some vertex $u$ of $V_n$ according to some rule (e.g. uniformly at random).  Typically, $u$ represents the initial case of the epidemic. 
Put 
$\mathcal E_0=\mathcal E_0'=\mathcal N_0=\varnothing$.   
The component of  $G_n^{\text{aux}}$ that contains $u$ is now constructed by iterating the following steps for $t=1,2,\ldots$.  

\begin{enumerate} \label{constralg}
\item 
If $t=1$ let $v=u$  be the vertex that is currently being explored.  Generate the downshifted \emph{group degree} $D$ of $v$ from 
the distribution given in (\ref{eq_po}). Here $D$ represents the (additional) number of cliques that $v$ is a member of. 
\item 
Draw $D$ elements $ B_{(1)},\ldots,  B_{(D)}$ from the multiset $\{B_1,\ldots, B_m\}$ independently with replacement. 
The probability to select $B_k\in \{B_1,\ldots, B_m\}$ in a specific draw is given by $B_k/m\mu_B^{(n)}$, where $\mu_B^{(n)}=\sum_{k=1}^mB_k/m.$  In other words, we generate $D$ independent copies of $\bar B_{(n)}$.

\item \label{point}
Let $v'_{(j)}\in V_n'$ be the vertex that corresponds to $B_{(j)}$, $j=1,\ldots, D$. 
For each  $v_{(j)}'\in\{v_{(1)},\ldots, v_{(D)}\}$, if $v'_{(j)}$ is not a member of the set $  \mathcal E'_{t-1}\subset V_n'$ of hitherto explored vertices  carry out step \ref{first} to \ref{last} below.  If $v_{(j)}'\in \mathcal E'_{t-1}$ then the clique that corresponds to  $v_{(j)}'$ is already explored,  so $v_{(j)}'$ is excluded  from
the steps below. 
\begin{enumerate}[label=(\alph*),ref=\ref{point} (\alph*)]
\item Generate the downshifted clique size $D_j'$ of $v_{(j)}'$  from 
the distribution given in (\ref{eq_poB}) with $B_{(j)}$ in place of $B_j$. \label{first}
\item Select $D_j'$ elements $ A_{(1)}^j,\ldots,  A_{(D_j')}^j$ from the multiset $\{A_1,\ldots, A_n\}$ independently with replacement. The probability to select  $A_k$ in a specific draw is given by $A_k/n\mu_A^{(n)}$, where $\mu_A^{(n)}=\sum_{k=1}^nA_k/n.$ In other words, we generate $D'_j$ independent copies of $\bar A_{(n)}$. 
\label{prev}
\item 
Let $v_k^j\in V_n$ be the vertex that corresponds to $A_{(k)}^j$, $k=1,\ldots, D_j'$. 

\item Add  an edge between each pair of distinct vertices in  $\{v\}\cup\{v_k^j\}_{k=1}^{D'_j}\setminus \mathcal E_{t-1}$ to $G_n$. 
\label{last}
\end{enumerate}
\item 
Update the set $\mathcal E_t'$ of  explored cliques by putting  $\mathcal E_{t}'=\mathcal E_{t-1}'\cup\{v'_{(1)},\ldots,  v'_{(D_i)}\}$.

\item 
Update the set $\mathcal N_t$ of neighbours of explored vertices by putting  $\mathcal N_{t}=\mathcal N_{t-1}\cup \{v_k^j:j=1,\ldots,D_i \text{ and } k=1\ldots, D'_j \}$.
\item 
Update the set $\mathcal E _t$ of  explored vertices by putting  $\mathcal E_{t}=\mathcal E_{t-1}\cup \{v \}$.

\item If $\mathcal N_t=\mathcal E_t $ then the construction of the component is complete and we exit the algorithm. Otherwise, update $v$ by picking some new vertex in $ \mathcal N_t\setminus\mathcal E_t $. If   $G_n^{\text{aux}}$ is constructed as the epidemic progresses, then  the  new vertex $v$ is the $t$th non-initial case). 

\end{enumerate}

If the nodes   $v_{(1)}',\ldots, v_{(D)}'$  are not distinct  or $\{v_{(1)}',\ldots, v_{(D)}'\} \cap \mathcal E_{t-1}\neq \varnothing$  in step \ref{point}  for some iteration $t\in\{0,1,\ldots \}$ then  the coupling of the approximating branching process and the epidemic process breaks down. 
Similarly, if in some iteration $t$ the  $v_k^j$ are not all distinct or $v_k^j\in E_{t-1}$ for some $j\in\{1,\ldots, D\}$ and $k\in\{ 1,\ldots, D_j'\}$ then the coupling breaks down.

The following claim, which is a variant of the birthday problem,  ensures that with high probability the coupling of the approximating branching process and the epidemic process holds during the first $o(\sqrt n)$ steps of the construction algorithm. 
The proof is included here for completeness.  

\begin{claim}\label{claim:coupling}
 Let  $j_n=o(\sqrt{n})$ and let $A$ have finite second moment. Suppose that  we draw elements   from the multiset $\{A_1,\ldots,A_n\}$ independently with replacement, and that the probability  that  $A_i\in\{A_1,\ldots,A_n\}$ is selected in a specific draw is proportional to $A_i$. It then holds  that 
the first $j_n$ drawn  elements are distinct  with $P$-probability tending to 1 as $n\to\infty$. 
 
\end{claim}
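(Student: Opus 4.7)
The plan is to reduce the claim to a straightforward birthday-type union bound, with the only subtlety being control of the per-draw collision probability via the second moment of $A$.

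First I would condition on the weights $A_1,\ldots, A_n$ and let $p_i^{(n)}:=A_i/(n\mu_A^{(n)})$ denote the probability of selecting $A_i$ in any given draw. If $X_1,\ldots, X_{j_n}$ are the indices of the drawn elements (i.i.d.\ with law $(p_i^{(n)})_{i=1}^n$ conditional on the weights), then for any fixed pair $k\neq \ell$,
\begin{equation*}
P(X_k=X_\ell\mid A_1,\ldots,A_n)=\sum_{i=1}^n (p_i^{(n)})^2=\frac{1}{n^2(\mu_A^{(n)})^2}\sum_{i=1}^n A_i^2.
\end{equation*}
A union bound over the $\binom{j_n}{2}$ pairs of draws then yields
\begin{equation*}
P(\text{collision}\mid A_1,\ldots,A_n)\leq \binom{j_n}{2}\cdot\frac{1}{n(\mu_A^{(n)})^2}\cdot\frac{1}{n}\sum_{i=1}^n A_i^2.
\end{equation*}

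Second, I would invoke the strong law of large numbers: since $E(A)=\mu_A\in(0,\infty)$ and $E(A^2)<\infty$, we have $\mu_A^{(n)}\to \mu_A$ and $(1/n)\sum_{i=1}^n A_i^2\to E(A^2)$ almost surely. In particular the random quantity $(\mu_A^{(n)})^{-2}\cdot (1/n)\sum_i A_i^2$ converges a.s.\ to the finite constant $E(A^2)/\mu_A^2$, and hence is $\OO(1)$ with $P$-probability tending to $1$. Combined with the bound above, the conditional collision probability is at most $(C+o(1))\cdot j_n^2/n$ on an event of probability $\to 1$, where $C=E(A^2)/\mu_A^2$.

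Finally, since $j_n=o(\sqrt n)$ gives $j_n^2/n\to 0$, taking unconditional expectation and using dominated convergence (the bound is trivially at most $1$) yields that the unconditional probability of a collision tends to $0$, which is exactly the claim. The main (very mild) obstacle is simply making sure the randomness of the weights does not spoil the birthday bound; this is handled by the law of large numbers plus the finite second moment assumption on $A$, which is precisely why the hypothesis $E(A^2)<\infty$ is invoked.
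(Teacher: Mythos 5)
Your proof is correct and follows essentially the same route as the paper: a conditional birthday-type union bound giving the factor $\binom{j_n}{2}\sum_i A_i^2/(\sum_i A_i)^2$, followed by the law of large numbers for the empirical first and second moments and a bounded-convergence argument to pass to the unconditional probability. The paper organizes the union bound over successive draws rather than over pairs, but the resulting bound and the rest of the argument are identical.
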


\begin{proof}[Proof of Claim \ref{claim:coupling}]

For $k=1,2,\ldots$, let $A^{(k)}$ be the $k$th element that is drawn from $\{A_1,\ldots, A_n\}$ and let $E_n(\cdot)$ be the conditional  expectation operator given $A_1,\ldots, A_n$. 
Conditioned on $A_1,\ldots, A_n$, for $k\geq 2$ the probability that $A^{(k)}$ is not distinct from $A^{(j)}$ for some $j\in\{1,\ldots, k-1\}$ is smaller  than or equal to 

$$E_n\p{\frac{A^{(1)}+\ldots+A^{(k-1)}}{A_1+\ldots+A_n} }=(k-1){\frac{A_1^2+\ldots +A_n^2}{\p{A_1+\ldots+A_n}^2} }.$$

Thus, by the union bound, conditioned on $\{A_1,\ldots, A_n\}$ the probability that  the first $j_n$ drawn elements are not distinct is  smaller than or equal to

$$\sum_{k=1}^{j_n}(k-1){\frac{A_1^2+\ldots +A_n^2}{\p{A_1+\ldots+A_n}^2} } $$

$$=\frac{(j_n-1)j_n\p{A_1^2+\ldots +A_n^2}}{2\p{A_1+\ldots+A_n}^2}$$

$$=\p{\frac{(j_n-1)j_n}{n}}\p{\frac{\p{A_1^2+\ldots +A_n^2}/n}{2(\mu_A^{(n)})^2}}$$

Since $A$ has finite second moments 

$$\frac{\p{A_1^2+\ldots +A_n^2}/n}{2(\mu_A^{(n)})^2}$$

converges to $E(A^2)/2\mu_A^2$ in $P$-probability as $n\to\infty$. Hence 

$$P(A^{(1)},\ldots, A^{(j_n)}\text{ are distinct})$$

$$\geq 1-E\p{1\wedge\p{\frac{(j_n-1)j_n}{n}}\frac{\p{A_1^2+\ldots +A_n^2}/n}{2(\mu_A^{(n)})^2} }$$

where the right side tends to 1 as $n\to\infty$. 
\end{proof}

In the  construction algorithm  described above,  the weights of explored vertices in $V_n$ and $V_n'$  are  generated from the distributions of $\bar A_{(n)}$ and $\bar B_{(n)}$, whereas the weights in the approximating branching are generated from the distributions of $\bar A$ and $\bar B$. Therefore, in order to prove Theorem \ref{main2} we  find  upper bounds  on the coupling error between $M\!P(\bar A_{(n)})$ and $MP(\bar A)$ and between $M\!P(\bar B_{(n)})$ and $MP(\bar B)$ which we state in Proposition \ref{claim2} and Lemma \ref{lemma0}.

Given the weights $A_1,\ldots, A_{n}$, for a coupling $\mathscr C$ of $A$ and $A_{(n)}$ 
we denote the corresponding probability measure and expectation by $P_{\mathscr C}$ and $E_{\mathscr C}$, respectively.  A  coupling of two random variables with  distributions $M\!P(\bar A)$ and $MP(\bar A_{(n)})$ can be constructed by first constructing a coupling $\mathscr C$ of their respective intensities $\bar A$ and $ A_{(n)}$, then generating a joint realization $(\bar A', \bar A_{(n)}')$ of these intensities according to $\mathscr C$ and in the next step using these intensities to generate two random variables from the Poisson distribution. 
If in the last step a maximal coupling is used then the (conditional) probability of a miscoupling is given by 
 $\tfrac{1}{2}d_{TV}(Po(\bar A'), Po(\bar A_{(n)}'))$. Here $d_{TV}$ denotes the total variation distance, i.e. for $a,b\in\mathbb{R}_{\geq 0}$ 
$$d_{TV}(Po(a), Po(b))=\sum_{k\geq 0}\abs{ \frac{a^ke^{-a}}{k!}-\frac{b^ke^{-b}}{k!}}.$$
 Thus, given the distribution of $A_{(n)}$ the probability of a miscoupling  is given by \\$E_{\mathscr C}(d_{TV}(Po(\bar A), Po(\bar A_{(n)})))/2$.

Let the coupling $\mathscr  C_n$  of $\bar A'$ and $\bar  A_{(n)}'$ be given by 
\begin{gather}\label{eq:min_coup}
\mathscr C_n:=\underset{\mathscr C}{\text{argmin}} E_{\mathscr C}\abs{\sqrt{\bar A}-\sqrt{\bar A_{(n)}}}
\end{gather}
for each $n\geq 1$, where the minimum extends over all couplings $\mathscr C$ of $A$ and $A_{(n)}$.

In the following proposition, $A_1,\ldots, A_n$ are random with respect to $P$. Thus, under $P$, $\mathscr C_n$ is a coupling of $\bar p$ and  the random probability measure $\bar p_n$.  
\begin{proposition}\label{claim2}
Assume that $E(A^q)<\infty$ for $q>\frac{3}{2}$. Let  $(\varepsilon_n)_{n\geq 1}$  be a sequence in $(0,\infty)$ such that if $q\neq 3$ then  $\varepsilon_n\log(n)\to \infty$ as $n\to 0$ and if $q= 3$ then  $\varepsilon_n\log(n)-\log(\log(n))\to \infty$ as $n\to 0$.  Then
 
$$P\left( E_{\mathscr C_n}(d_{TV}(Po(\bar A), Po(\bar A_{(n)})))\geq n^{-\gamma+\varepsilon_n}\right)\to 0\quad \text{as } n\to\infty $$

where $\gamma=\frac{1}{2}\wedge \frac{q-3/2}{q}$. 

\end{proposition}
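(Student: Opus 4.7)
The plan is to (i) reduce the expected coupling error to a weighted $L^1$-integral of the difference between the true and the empirical size-biased survival functions of $A$, (ii) estimate this integral by splitting at an optimal threshold and combining a tail moment bound with a variance bound for the bulk, and (iii) convert the resulting $P$-expectation bound into the claimed probability bound via Markov's inequality.

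For step~(i) I would invoke the classical Poisson total variation bound $d_{TV}(Po(a),Po(b))\le|\sqrt a-\sqrt b|$, which follows from the Hellinger affinity identity $\sum_{k\ge 0}\sqrt{\tfrac{a^ke^{-a}}{k!}\tfrac{b^ke^{-b}}{k!}}=e^{-(\sqrt a-\sqrt b)^2/2}$ together with $d_{TV}\le$ Hellinger distance. Combined with the defining minimality of the coupling $\mathscr C_n$ in~(\ref{eq:min_coup}), this gives
$$E_{\mathscr C_n}\!\bigl(d_{TV}(Po(\bar A),Po(\bar A_{(n)}))\bigr)\le\inf_{\mathscr C}E_{\mathscr C}\bigl|\sqrt{\bar A}-\sqrt{\bar A_{(n)}}\bigr|=W_1\bigl(\mathrm{law}(\sqrt{\bar A}),\mathrm{law}(\sqrt{\bar A_{(n)}})\bigr).$$
The one-dimensional Kantorovich representation together with the change of variables $x=t^2$ rewrites this $1$-Wasserstein distance as
$$\int_0^\infty\frac{|\bar F(x)-\bar F_n(x)|}{2\sqrt x}\,dx,$$
where $\bar F$ and $\bar F_n$ denote the survival functions of $\bar A$ and $\bar A_{(n)}$.

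For step~(ii) I take the $P$-expectation and split the integral at a threshold $T_n$. On the tail $x>T_n$, the size-biasing identity and Markov yield $\bar F(x)\le Cx^{-(q-1)}$, and a matching bound for $E\bar F_n(x)$ restricted to the good event $\{\mu_A^{(n)}\ge\mu_A/2\}$ (whose complement has probability $o(1)$ by Chebyshev or Marcinkiewicz--Zygmund) gives a tail contribution of order $T_n^{3/2-q}$. On the bulk $x\le T_n$, write $\bar F_n(x)=H_n(x)/\mu_A^{(n)}$ with $H_n(x)=n^{-1}\sum_iA_i\ind(A_i>x)$ and decompose
$$\bar F_n(x)-\bar F(x)=\frac{H_n(x)-H(x)}{\mu_A^{(n)}}-\bar F(x)\,\frac{\mu_A^{(n)}-\mu_A}{\mu_A^{(n)}}.$$
On the good event, the variance bound $\mathrm{Var}(H_n(x))\le E(A^2\ind(A>x))/n$ and Cauchy--Schwarz then yield
$$E|\bar F_n(x)-\bar F(x)|\le C\sqrt{E(A^2\ind(A>x))/n}+C\bar F(x)/\sqrt n.$$
Integrating against $x^{-1/2}$, the second summand contributes $O(n^{-1/2})$ (using $E(\sqrt{\bar A})=E(A^{3/2})/\mu_A<\infty$ under $q>3/2$), while the first contributes $O(T_n^{(3-q)/2}/\sqrt n)$ when $3/2<q<3$, $O(\log(T_n)/\sqrt n)$ when $q=3$, and $O(n^{-1/2})$ when $q>3$.

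For step~(iii), taking $T_n=n^{1/q}$ in the case $3/2<q<3$ balances the tail and bulk terms and produces the rate $n^{-(q-3/2)/q}$; for $q>3$ only the $n^{-1/2}$ piece survives; and for $q=3$ a logarithmic factor $\log n$ arises, matching precisely the sharpened hypothesis $\varepsilon_n\log n-\log\log n\to\infty$. Markov's inequality applied to the resulting bound $E\!\left[E_{\mathscr C_n}d_{TV}\right]\le Cn^{-\gamma}$ (respectively $C(\log n)n^{-1/2}$ when $q=3$) then delivers the claimed convergence in $P$-probability. I expect the main technical obstacle to lie in the bulk variance bound in the regime $3/2<q\le 2$, where $E(A^2)$ may be infinite: the variance there has to be obtained after first truncating $A$ at an $n$-dependent level, and the resulting truncation error must be absorbed into the tail piece without spoiling the optimal balance of $T_n$; some care is also needed to handle the random denominator $\mu_A^{(n)}$ uniformly in $x$.
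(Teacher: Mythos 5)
Your proposal is correct in its overall architecture and shares the paper's first reduction -- bounding the Poisson total variation distance by $\abs{\sqrt{\bar A}-\sqrt{\bar A_{(n)}}}$ and hence by the $1$-Wasserstein distance between the laws of $\sqrt{\bar A}$ and $\sqrt{\bar A_{(n)}}$, then finishing with Markov's inequality -- but it bounds the expected Wasserstein distance by a genuinely different route. The paper proves this bound as a separate statement (Proposition 3, the analogue of Theorem 1 of Fournier and Guillin) by adapting their dyadic-partition argument: the $W_1$ distance is controlled by sums over dyadic scales of $\abs{\nu_n(2^mF\cap U_m)-\nu(2^mF\cap U_m)}$, with $\min(\sqrt{p/n},\,p)$-type bounds per block and a Chernoff estimate on the bad event $S_n=\{\mu_A^{(n)}\leq\mu_A-\kappa\}$. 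You instead use the one-dimensional Kantorovich representation $\int_0^\infty\abs{\bar F(x)-\bar F_n(x)}\,x^{-1/2}dx/2$, split at a single threshold $T_n$, and balance a tail-moment bound against a bulk variance bound; with $T_n=n^{1/q}$ this reproduces exactly the paper's rates $n^{-(q-3/2)/q}$, $n^{-1/2}\log n$ and $n^{-1/2}$, so the exponent $\gamma$ and the role of $\varepsilon_n$ (including the $\log\log n$ correction at $q=3$) come out right. Your approach is more elementary and exploits the one-dimensionality directly, at the cost of being specific to $d=1$ (which suffices here); the paper's block argument is essentially the general-dimension machinery specialized.

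Two points in your plan do need the repair you yourself flag, and they are both fixable. First, for $3/2<q\leq 2$ the displayed bulk bound $\sqrt{E(A^2\ind(A>x))/n}$ is vacuous when $E(A^2)=\infty$; truncating at level $M=T_n=n^{1/q}$ gives $\mathrm{Var}\leq M^{2-q}E(A^q)/n$ uniformly in $x$ and a truncation error $E(A\ind(A>M))\leq E(A^q)M^{1-q}$, and with $T_n=M=n^{1/q}$ both extra terms are of order $n^{-(q-3/2)/q}$, so the optimal balance survives. Second, the term $\bar F(x)\abs{\mu_A^{(n)}-\mu_A}/\mu_A^{(n)}$ cannot be handled by a variance (Chebyshev) bound when $q<2$; the von Bahr--Esseen inequality gives $E\abs{\mu_A^{(n)}-\mu_A}=\OO(n^{-(q\wedge 2-1)/(q\wedge 2)})$, which is of smaller order than $n^{-(q-3/2)/q}$, so this piece is harmless. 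Finally, since $E_{\mathscr C_n}(d_{TV})$ is bounded, the bad events (small $\mu_A^{(n)}$, and the paper's $D_n^c=\{\max_iA_i=0\}$) need only have vanishing probability and can be split off before applying Markov; the paper instead needs its Chernoff bound because it controls an expectation that is not a priori bounded on $S_n$.
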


\begin{remark}
Proposition \ref{claim2} holds also if $A$, $\bar A$ and $\bar A_{(n)}$ are replaced by $B$, $\bar B$ and $\bar B_{(n)}$.  
\end{remark}

\begin{proof}[Proof of Proposition \ref{claim2}]

We have that (the first inequality follows from \textcite[Theorem 1.C]{barbour1992poisson})

$$ E_{\mathscr C_n} \left( d_{TV}(Po(\bar A), Po(\bar A_{(n)}))\right)$$

$$\leq E_{\mathscr C_n}\p{\frac{1}{\sqrt{\bar A_{(n)}\vee \bar A}} \abs{{\bar A}-{\bar A_{(n)}}}}$$

$$\leq E_{\mathscr C_n}\p{\frac{1}{\frac{1}{2}\p{\sqrt{\bar A}+\sqrt {\bar A_{(n)}}}} \abs{\sqrt{\bar A}+\sqrt {\bar A_{(n)}}}\abs{\sqrt{\bar A}-\sqrt {\bar A_{(n)}}}}$$

$$= 2 E_{\mathscr C_n}\p{\abs{\sqrt{\bar A}-\sqrt {\bar A_{(n)}}}}.$$

Hence, by Proposition \ref{lemma:wass_dist} below,

$$E\p{ E_{\mathscr C_n} \left( d_{TV}(Po(\bar A), Po(\bar A_{(n)})\right);D_n}=\begin{cases}\OO(n^{-\frac{q-3/2}{q}}) & \text{ if }3/2<q<3\\
\OO(n^{-\frac{1}{2}}\log(n)) & \text{ if }q=3\\
\OO(n^{-\frac{1}{2}}) & \text{ if }q>3\\
\end{cases},$$
where $D_n$ is the event $\max_{i\in[n]}(A_i)>0$.
The assertion of the Proposition now follows from Markov's inequality and $P(D_n^c)=p(0)^n$, $p(0)<1$. 
\end{proof}

By the degree distributions of $G_n^{\text{aux}}$ given in  (\ref{eq_po}) and (\ref{eq_poB}), in order to arrive at  Theorem \ref{main2} we will also need a bound on the coupling error of  $M\!P\p{\bar B_{(n)}{\mu_A^{(n)}}/{\mu_A}}$
and $M\!P\p{\bar B_{(n)}}$ and of $M\!P\left( \bar A_{(n)} {\mu_B^{(n)}\floor{ n\mu_A/\mu_B}}/{n\mu_A}\right)$ and $M\!P\p{\bar A_{(n)}}$.

\begin{proposition}
[c.f. \textcite{Fournier2015}, Theorem 1]\label{lemma:wass_dist}
If $E(A^{q})<\infty$ for $q > 3/2$  then 
$$E\p{E_{\mathscr C_n}\p{\abs{\sqrt{\bar A}-\sqrt {\bar A_{(n)}}}};D_n}=\begin{cases}\OO(n^{-\frac{q-3/2}{q}}) & \text{ if }3/2<q<3\\
\OO(n^{-\frac{1}{2}}\log(n)) & \text{ if }q=3\\
\OO(n^{-\frac{1}{2}}) & \text{ if }q>3\\
\end{cases}
$$
where ${\mathscr C_n}$ is the coupling in (\ref{eq:min_coup}) and $D_n$ is the event that  $\max_{i\in[n]}(A_i)>0$. 

\end{proposition}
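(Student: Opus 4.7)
The plan is to adapt the Fournier–Guillin route to the size-biased empirical setting, exploiting monotonicity to reduce everything to a one-dimensional CDF comparison weighted by $1/\sqrt{u}$. First I would observe that since $\sqrt{\cdot}$ is monotone and the underlying random variables are real valued, the coupling $\mathscr{C}_n$ is (a version of) the quantile coupling and realises the Wasserstein--$1$ distance, so
\[
E_{\mathscr C_n}\!\bigl|\sqrt{\bar A}-\sqrt{\bar A_{(n)}}\bigr| \;=\; W_1\!\bigl(\text{law}\sqrt{\bar A},\;\text{law}\sqrt{\bar A_{(n)}}\bigr)\;=\;\int_0^\infty\!\bigl|F_{\bar A}(u)-F_{\bar A_{(n)}}(u)\bigr|\,\frac{du}{2\sqrt{u}},
\]
where in the last step I change variable $u=t^{2}$ in the one-dimensional formula $W_1=\int|F-G|$. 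This rewrites the problem in terms of the size-biased empirical CDFs.

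Second, using $F_{\bar A}(u)=E(A;A\le u)/\mu_A$ and $F_{\bar A_{(n)}}(u)=T_n(u)/S_n$ with $T_n(u)=\sum A_i\ind(A_i\le u)$ and $S_n=\sum_j A_j$, I would split
\[
F_{\bar A_{(n)}}(u)-F_{\bar A}(u)=\frac{T_n(u)-ET_n(u)}{S_n}+\frac{E(A;A\le u)}{\mu_A}\cdot\frac{n\mu_A-S_n}{S_n}.
\]
On $D_n$ (whose complement has probability $p(0)^n$, contributing a negligible exponentially small term to be absorbed at the end), $S_n/n$ concentrates around $\mu_A$ by Chebyshev, using $E(A^q)<\infty$ with $q>3/2$ (the sub-quadratic case handled by truncating $A$ at $u$). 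The key variance estimate is
\[
\mathrm{Var}\bigl(T_n(u)\bigr)\le nE\bigl(A^{2}\ind(A\le u)\bigr)\le nE(A^{q})\,u^{2-q},
\]
yielding by Cauchy--Schwarz $E\!\bigl|T_n(u)/n-E(A;A\le u)\bigr|\lesssim n^{-1/2}u^{1-q/2}$. Combined with the trivial bound $|F_{\bar A}-F_{\bar A_{(n)}}|\le 1$ and the tail bound $1-F_{\bar A}(u)\le E(A;A>u)/\mu_A\lesssim u^{1-q}$, I obtain a pointwise bound on $E|F_{\bar A_{(n)}}(u)-F_{\bar A}(u)|$.

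Third, I would integrate against $du/\sqrt{u}$, splitting the range at a threshold $T=T(n)$. The variance piece contributes (roughly) $n^{-1/2}\int_0^T u^{1/2-q/2}\,du$, while the tail piece contributes $\int_T^\infty u^{1/2-q}\,du$; optimising $T$ in $n$ yields the three rates. For $q>3$ the variance integral converges at $\infty$ so that $T=\infty$ is optimal, giving $n^{-1/2}$; for $q=3$ the variance integral diverges logarithmically and the optimisation produces the $n^{-1/2}\log n$ factor; for $3/2<q<3$ the balance $n^{-1/2}T^{3/2-q/2}\asymp T^{3/2-q}$ gives $T\asymp n^{1/q}$ and rate $n^{-(q-3/2)/q}$, matching the definition $\gamma=\tfrac{1}{2}\wedge\tfrac{q-3/2}{q}$.

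The main obstacle is carrying out the optimisation in the intermediate regime $3/2<q<3$ cleanly and making sure the contribution of the second summand in the CDF split (the one involving $n\mu_A-S_n$) is controlled by the same rate; this requires treating $S_n$ via a similar variance-plus-tail decomposition and noting that $E(A;A\le u)/\mu_A$ is bounded by $1$ so no extra $u$-dependence appears there. The restriction $q\ne 3$ in Proposition~\ref{claim2} reflects the borderline logarithm in the present proposition.
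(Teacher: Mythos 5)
Your route (the one-dimensional identity $W_1=\int\abs{F-G}$ after the substitution $u=t^2$, then a variance-plus-tail split at a threshold $T\asymp n^{1/q}$) is genuinely different from the paper's proof, which instead adapts the dyadic partition scheme of Fournier--Guillin (their Lemmas 5 and 6), bounds $\abs{\nu_n-\nu}$ on each dyadic block by $\min\bigl(\sqrt{p/n},\,p\bigr)$-type estimates, and sums $2^{3m}\bigl(n^{-1/2}2^{-qm}\wedge 2^{-2qm}\bigr)$ over scales, treating the random normalisation $\mu_A^{(n)}$ via the event $S_n$ and a Chernoff bound. Your scheme can be made to deliver the same three rates, and for $3/2<q\le 2$ it is essentially correct as written (granting the von Bahr--Esseen/truncation control of $S_n/n-\mu_A$ you allude to). However, as stated it has a genuine gap precisely in the regimes $q>2$, i.e.\ in two and a half of the three cases of the proposition.

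Concretely: (i) your key displayed estimate $E\bigl(A^2\ind(A\le u)\bigr)\le E(A^q)\,u^{2-q}$ is false for $q>2$ — on $\{A\le u\}$ one has $A^{2-q}\ge u^{2-q}$ when $2-q<0$, so the inequality reverses, and indeed the left side tends to $E(A^2)>0$ while the right side tends to $0$. The correct analogue for $q\ge 2$ is the upper-tail bound $E\bigl(A^2\ind(A> u)\bigr)\le E(A^q)\,u^{2-q}$, so for $q>2$ you must compare $1-F_{\bar A_{(n)}}$ with $1-F_{\bar A}$ and centre $\sum_i A_i\ind(A_i>u)$ rather than $T_n(u)$. (ii) The same issue hits your normalisation term: bounding $E(A;A\le u)/\mu_A$ by $1$ ("no extra $u$-dependence") leaves you with $\int_0^T u^{-1/2}\,du\cdot E\abs{S_n/n-\mu_A}/(\mu_A-\kappa)\asymp \sqrt{T}\,n^{-1/2}$ for $q\ge 2$, which with $T\asymp n^{1/q}$ is $n^{-1/2+1/(2q)}$ — strictly worse than $n^{-(q-3/2)/q}$ for $2<q<3$ and worse than the claimed rates at $q=3$ and $q>3$ (and it diverges outright under your proposal $T=\infty$ for $q>3$). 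The repair is again to pass to upper tails, so that the prefactor of the $S_n$-fluctuation is $E(A;A>u)/\mu_A\lesssim u^{1-q}$ and its contribution becomes $\OO(n^{-1/2})$ after integration. With these two corrections (lower-truncation arguments for $q\le 2$, upper-tail arguments for $q>2$) your optimisation does reproduce the rates $n^{-(q-3/2)/q}$, $n^{-1/2}\log n$ and $n^{-1/2}$; without them, the argument as written does not establish the $q=3$ and $q>3$ cases, nor the full range $2<q<3$.
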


\begin{proof}[Proof of Proposition \ref{lemma:wass_dist}]
This proof is, in part, analogous to  the proof of Theorem 1 in \textcite{Fournier2015} and is presented here in full for completeness. 
The  differences between the present proof and the proof of Theorem 1 in \textcite{Fournier2015} arise mainly due to the size-biasing of the weights.

Throughout, $C_1, C_2,\ldots$ are positive constants that  depend only on $q$ and the distribution of $A$,  and $U\subset[0,\infty)$ is a generic Borel set. 

Note that $$E_{\mathscr C_n}\abs{\sqrt{\bar A}-\sqrt{\bar A_{(n)}}}$$ is the 1-Wasserstein  distance between the distributions of $\sqrt{\bar A}$ and $\sqrt{\bar A_{(n)}}$. Hence, with the notation $2^mF=\{2^mu:u\in F\}$ for any event $F$, we have by Lemma 5 and 6 in \textcite{Fournier2015}

\begin{equation}\label{eq:dtv}
    \begin{gathered}
E_{\mathscr C_n}\abs{\sqrt{\bar A}-\sqrt{\bar A_{(n)}}}\\
\\
\leq C_1 \sum_{m\geq 0} 2^m\sum_{\ell\geq 0}2^{-\ell} \sum_{F\in\mathcal P_\ell}\abs{\nu_n(2^m F\cap U_m)-\nu(2^m F\cap U_m)}
    \end{gathered}
\end{equation}

where $\mathcal P_\ell$ is the partition of [0,1] that consists of $\{0\}$ and $2^{-\ell+1}k+(0,2^{-\ell+1}]$ for $k\in\{0,1,\ldots, 2^{\ell-1}-1\}$, $U_0=[0,1]$ and $U_m=[0,2^m]\setminus [0, 2^{m-1}]$ for $m\geq 1$, and $\nu$ and $\nu_n$ are the probability distributions of $\sqrt{ \bar A}$ and $\sqrt{ \bar A_{(n)}}$, respectively. 
Now, with the notation $U^2=\{u^2:u\in U\}$, by the triangle inequality

\begin{equation}\label{eq:a21}
\begin{gathered}
\abs{\nu_n(U)-\nu(U)}\\
\\
=\abs{\bar p_n(U^2)-\bar p(U^2)}\\
\\
=\abs{\frac{\sum_{i=1}^nA_i\ind(A_i\in U^2)}{n\mu_A^{(n)}}-\frac{E(A\ind(A\in U^2))}{\mu_A}}\\
\\
\leq \frac{1}{n\mu_A^{(n)}}\sum_{i=1}^n{A_i\ind(A_i\in U^2})\abs{1-\frac{\mu_A^{(n)}}{\mu_A}}+ \frac{1}{\mu_A}\abs{{\frac{1}{n}\sum_{i=1}^nA_i\ind(A_i\in U^2)}-{E(A\ind(A\in U^2))}}.
\end{gathered}
\end{equation}
The second term in the right hand side in (\ref{eq:a21}) satisfies (with the first inequality following from Jensen's inequality)

\begin{equation}\label{eq:a2}
\begin{gathered}
E\p{\abs{{\frac{1}{n}\sum_{i=1}^nA_i\ind(A_i\in U^2)}-{E(A\ind(A\in U^2))}}}\\
\\
\leq \sqrt{Var\p{{\frac{1}{n}\sum_{i=1}^nA_i\ind(A_i\in U^2)}}}\\
\\
\leq(\sup U)^2\sqrt{\frac{1}{n}p(U^2)}
\end{gathered}
\end{equation}

and, again by the triangle inequality, 

\begin{equation}\label{eq:a3}
\begin{gathered}
E\p{\abs{\frac{1}{n}\sum_{i=1}^nA_i\ind(A_i\in U^2)-E(A\ind(A\in U^2))}}
\leq 2(\sup U)^2p(U^2).
\end{gathered}
\end{equation}

Combining (\ref{eq:a2}) and (\ref{eq:a3}) gives that the second term in the right hand side in (\ref{eq:a21}) satisfies 

\begin{equation}\label{eq:ab00}
    \begin{gathered}
    \frac{1}{\mu_A}\abs{{\frac{1}{n}\sum_{i=1}^nA_i\ind(A_i\in U^2)}-{E(A\ind(A\in U^2))}}\\
    \\
\leq C_2(\sup U)^2\p{\sqrt{\frac{1}{n}p(U^2)}\wedge p(U^2)}.
    \end{gathered}
\end{equation}

In order to find a similar upper bound on the  first  term in the right hand side in (\ref{eq:a21}) we define the event $S_n$ as

\begin{gather}\label{eq_sn}
S_n:=\{\mu_A^{{(n)}}\leq \mu_A-\kappa\}
\end{gather}

where $\kappa\in(0, \mu_A)$ is a fixed constant such that  $e^{(\mu_A-\kappa)}E(e^{-A})<1$. Then, with the second inequality following from  Hölder's inequality and the the last inequality from $\sum_{i=1}^n\ind(A_i\in U^2)\sim Bin (n, p(U^2))$, 

\begin{equation}\label{eq:ab12}
    \begin{gathered}
    E\p{ \frac{1}{n\mu_A^{(n)}}\sum_{i=1}^n{A_i\ind(A_i\in U^2})\abs{1-\frac{\mu_A^{(n)}}{\mu_A}};S_n^c\cap D_n}\\
    \\
    \leq\frac{(\sup U)^2}{(\mu_A-\kappa)}\p{\frac{1}{n} E\p{\sum_{i=1}^n{\ind(A_i\in U^2})\abs{1-\frac{\mu_A^{(n)}}{\mu_A}}}\wedge (2p(U^2))}\\
    \\
    \leq  \frac{(\sup U)^2}{(\mu_A-\kappa)}\p{ \sqrt{\frac{1}{n^3}E\p{\p{\sum_{i=1}^n{\ind(A_i\in U^2})}^2}Var\p{\frac{A}{\mu_A}}}\wedge (2p(U^2))}\\
    \\
    \leq \frac{(\sup U)^2}{(\mu_A-\kappa)}\p{ \sqrt{\frac{1}{n^3}np(U^2)(np(U^2)+1)Var\p{\frac{A}{\mu_A}}}\wedge (2p(U^2))}\\
    \\
    \leq C_3(\sup U)^2\p{\sqrt{\frac{1}{ n}p(U^2)}\wedge p(U^2))}.
    \end{gathered}
\end{equation}

By the  inequalities in (\ref{eq:ab00}) and (\ref{eq:ab12}) together with the fact that for any  $G\subset [0,\infty)$ it holds that $\sup(G\cap U_m^2)\leq \sup(U_m^2)= 2^{2m}$, we have

\begin{equation}\label{eq:dtv1}
    \begin{gathered}
    \sum_{F\in\mathcal P_\ell}E\p{\abs{\nu_n(2^m F\cap U_m)-\nu(2^m F\cap U_m)}; D_n\cap S_n^c}\\
    \\
    \leq C_4\p{ 2^{2m}\sum_{F\in\mathcal P_\ell}\p{\sqrt{\frac{1}{n}p(2^{2m} F^2\cap U_m^2)}\wedge p(2^{2m} F^2\cap U_m^2)}}
    \end{gathered}
\end{equation}

and  (with the second inequality following from  the Cauchy–Schwarz inequality  and the fact that, since $E(A^q)$ is  finite, $P(U_m^2)\leq E(A^q)2^{-2qm}$)

\begin{equation}\label{eq:dtv2}
    \begin{gathered}
     \sum_{F\in\mathcal P_\ell}\p{\sqrt{\frac{1}{n}p(2^{2m} F^2\cap U_m^2)}\wedge p(2^{2m} F^2\cap U_m^2)}\\
     \\
     \leq \p{\frac{1}{\sqrt n}\sum_{F\in\mathcal P_\ell}\sqrt{p(2^{2m} F^2\cap U_m^2)}}\wedge p(U_m^2)\\
     \\
     \leq C_5\p{\p{\frac{1}{\sqrt n}\sqrt{\abs{P_\ell}p(  U_m^2)}
     }\wedge 2^{-2qm}}\\
     \\
     \leq C_5\p{\p{\frac{1}{\sqrt n}\sqrt{2^\ell E(A^q)2^{-2qm}}
     }\wedge 2^{-2qm}}
     \\
     \\
     \leq C_6\p{{\frac{1}{\sqrt n}2^{\ell/2-qm}
     }\wedge 2^{-2qm}}. 
    \end{gathered}
\end{equation}

Hence, by (\ref{eq:dtv}), (\ref{eq:dtv1}) and (\ref{eq:dtv2}),

$$E\p{E_{\mathscr C_n}\p{\abs{\sqrt{\bar A}-\sqrt{\bar A_{(n)}}}};S_n^c\cap D_n}$$

$$\leq  C_7 \sum_{m\geq 0} 2^m\sum_{\ell\geq 0}2^{-\ell+2m}\p{{\frac{1}{\sqrt n}2^{\ell/2-qm}
     }\wedge 2^{-2qm}}
$$

$$\leq   C_7 \sum_{m\geq 0} 2^{3m}\sum_{\ell\geq 0}2^{-\ell/2}\p{{\frac{1}{\sqrt n}2^{-qm}
     }\wedge 2^{-2qm}}
$$

$$\leq   C_8 \sum_{m\geq 0} \p{{\frac{1}{\sqrt n}2^{m(3-q)}
     }\wedge 2^{-m(2q-3)}}.
$$

If $q>3$ then 

$$     C_8 \sum_{m\geq 0} \p{{\frac{1}{\sqrt n}2^{m(3-q)}
     }\wedge 2^{-m(2q-3)}} \leq C_9  \frac{1}{\sqrt n}.
$$

If $q\in(3/2, 3)$ then with $m_n=\floor{\log (n )/(2q\log( 2))}$

$$     C_8 \sum_{m\geq 0} \p{{\frac{1}{\sqrt n}2^{m(3-q)}
     }\wedge 2^{-m(2q-3)}} $$

$$\leq  C_8 \sum_{m=0}^{m_n} {\frac{1}{\sqrt n}2^{-m(q-3)}
     }+ C_8 \sum_{m>m_n}^\infty 2^{-m(2q-3)}
$$

$$=O\p{n^{-\p{1-\frac{3}{2q}}}}.$$

If $q=3$ then with  $a_n=\floor{\log(n)/log(2)}$

$$     C_8 \sum_{m\geq 0} \p{{\frac{1}{\sqrt n}2^{m(3-q)}
     }\wedge 2^{-m(2q-3)}} $$

$$\leq  C_8 {\frac{ a_n}{\sqrt n}
     }+ C_8 \sum_{m\geq a_n}^\infty 2^{-m(2q-3)}
$$

$$=O\p{n^{-1/2}\log(n)}.$$

Thus it only remains to bound the expectation of $E_{\mathscr C_n}\p{\abs{\sqrt{\bar A}-\sqrt{\bar A_{(n)}}}}$ on  $S_n\cap D_n$, where 
 $S_n$ is the event in  (\ref{eq_sn}). Now, with the third inequality following from the fact that on $S_n$ we have  $\sqrt{A_i}\leq \sqrt{n(\mu_A-\kappa)}$  for $i=1,\ldots, n$,  

$$E\p{E_{\mathscr C_n}\p{\abs{\sqrt{\bar A}-\sqrt{\bar A_{(n)}}}};  S_n\cap D_n}$$

$$\leq P(S_n\cap D_n)E(\sqrt{\bar A})+E\p{\sqrt{\bar A_{(n)}};S_n\cap D_n}$$

$$= P(S_n\cap D_n)E(\sqrt{\bar A})+E\p{\frac{\sum_{i=1}^nA_i^{\frac{3}{2}}}{\sum_{i=1}^nA_i};S_n\cap D_n}$$

$$\leq P(S_n\cap D_n)E(\sqrt{\bar A})+E\p{{\sum_{i=1}^n\sqrt {A_i}};S_n\cap D_n}$$

$$\leq P(S_n\cap D_n)E(\sqrt{\bar A})+E\p{n^{3/2}\sqrt{\mu_A-\kappa};S_n\cap D_n}$$

$$=\OO(n^{3/2}P(S_n))$$

$$=\OO(n^{3/2}e^{n(\mu_a-\kappa)}E(e^{-A})^n)$$

where the last step follows from the Chernoff  bound $P(S_n)\leq e^{n(\mu_a-\kappa)}E(e^{-A})^n$. The assertion now follows by recalling that $e^{(\mu_a-\kappa)}E(e^{-A})<1$.

\end{proof}

\begin{lemma}\label{lemma0}
Let $\varepsilon_n$ be as in Proposition \ref{claim2} and $E(A^2), E(B^2)<\infty$. Then, w.h.p.,  
\begin{gather}\label{lemmaeq1}
d_{TV}\p{ Po\p{\bar B_{(n)}{\mu_A^{(n)}}/{\mu_A}},Po\p{\bar B_{(n)}}}\leq n^{-\frac{1}{2}+\varepsilon_n}
\end{gather}
and 
\begin{gather}\label{lemmaeq2}
 d_{TV}\p{ Po\p{ \bar A{\mu_B^{(n)}\floor{ n\mu_A/\mu_B}}/{n\mu_A}},Po\p{\bar A_{(n)}}}\leq n^{-\frac{1}{2}+\varepsilon_n}.
 \end{gather}

\end{lemma}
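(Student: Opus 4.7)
The plan is to reduce both statements to the elementary bound $d_{TV}(Po(a),Po(b))\leq\abs{a-b}$ and then control the resulting absolute difference of intensities by combining the law of large numbers (for the second moments of $A$ and $B$) with Chebyshev's inequality.

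For (\ref{lemmaeq1}), I would couple the two mixed Poisson distributions using a common realisation of $\bar B_{(n)}$, which is permissible since the rescaling factor $\mu_A^{(n)}/\mu_A$ depends only on $A_1,\ldots,A_n$ and is therefore conditionally independent of $\bar B_{(n)}$ given $B_1,\ldots,B_m$. Applying the pointwise Poisson bound and integrating out $\bar B_{(n)}$ yields
$$d_{TV}\p{Po(\bar B_{(n)}\mu_A^{(n)}/\mu_A),\,Po(\bar B_{(n)})} \leq E\p{\bar B_{(n)} \mid B_1,\ldots,B_m}\cdot\abs{\frac{\mu_A^{(n)}}{\mu_A} - 1}.$$
A direct calculation gives $E\p{\bar B_{(n)} \mid B_1,\ldots,B_m} = \p{\sum_{j=1}^m B_j^2}/(m\mu_B^{(n)})$, which converges almost surely to $E(B^2)/\mu_B$ by the LLN (using $E(B^2)<\infty$) and is therefore bounded w.h.p.\ by a constant $C$. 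For the remaining factor, Chebyshev applied with $\mathrm{Var}(A)<\infty$ gives $P\p{\abs{\mu_A^{(n)}-\mu_A} > n^{-1/2+\varepsilon_n/2}} \leq \mathrm{Var}(A)/n^{\varepsilon_n}$, which tends to $0$ because $\varepsilon_n\log n \to \infty$. Multiplying the two estimates shows $d_{TV} \leq C n^{-1/2+\varepsilon_n/2} \leq n^{-1/2+\varepsilon_n}$ w.h.p.

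For (\ref{lemmaeq2}) the same template applies after splitting the scaling factor as
$$\frac{\mu_B^{(n)}\floor{n\mu_A/\mu_B}}{n\mu_A} \;=\; \frac{\mu_B^{(n)}}{\mu_B}\cdot\frac{\mu_B\floor{n\mu_A/\mu_B}}{n\mu_A},$$
so that the deterministic second factor differs from $1$ by $\OO(1/n)$, while $\abs{\mu_B^{(n)}/\mu_B - 1}$ is of order $n^{-1/2+\varepsilon_n/2}$ w.h.p.\ by the same Chebyshev argument applied to the $B_j$ (using $E(B^2)<\infty$ and $m=\Uptheta(n)$). The analogue of $E\p{\bar B_{(n)} \mid B_1,\ldots,B_m}$ is now $E\p{\bar A_{(n)} \mid A_1,\ldots,A_n} = \sum_{i=1}^n A_i^2/(n\mu_A^{(n)})$, which is likewise bounded w.h.p.\ by the LLN because $E(A^2)<\infty$.

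There is no substantive obstacle here — both inequalities reduce to routine concentration arguments once the Poisson intensity comparison $d_{TV}(Po(a),Po(b))\leq\abs{a-b}$ is invoked. The only bookkeeping is to intersect the high-probability event that the size-biased conditional expectation is bounded with the Chebyshev event for the deviation of $\mu_A^{(n)}$ (respectively $\mu_B^{(n)}$); each has probability $1-o(1)$, so their intersection does too, and the product bound then yields the advertised rate $n^{-1/2+\varepsilon_n}$. The role of the hypothesis $\varepsilon_n\log n\to\infty$ is precisely to make $n^{\varepsilon_n}$ diverge faster than any polynomial, so that the Chebyshev tail in the denominator beats the constant in the numerator.
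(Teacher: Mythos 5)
Your proposal is correct and follows essentially the same route as the paper: the pointwise bound $d_{TV}(Po(a),Po(b))\leq\abs{a-b}$ combined with a Chebyshev estimate for $\abs{\mu_A^{(n)}/\mu_A-1}$ (resp. $\mu_B^{(n)}$) at scale $n^{-1/2+\varepsilon_n/2}$, with $\varepsilon_n\log n\to\infty$ absorbing the constants. The only difference is bookkeeping: the paper bounds the unconditional expectation $E(\bar B_{(n)})\cdot n^{-1/2+\varepsilon_n/2}$ on the good event and finishes with Markov's inequality, whereas you bound the conditional size-biased mean $\sum_j B_j^2/(m\mu_B^{(n)})$ by a constant w.h.p.\ via the law of large numbers and intersect the two high-probability events — an equally valid variant of the same argument.
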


\begin{proof}[Proof of Lemma \ref{lemma0}]

Define $$H_n:=\left\{\abs{1-{\mu_A^{(n)}}/{\mu_A}}\leq n^{-\frac{1}{2}+\frac{1}{2}\varepsilon_n} \right\}. $$
By Chebyshev's inequality 
$P\p{H_n^c}=\OO\p{n^{-\varepsilon_n}}.$ Now (again by \textcite[Theorem 1.C]{barbour1992poisson})

$$E\p{d_{TV}\p{Po\p{\bar B_{(n)}{\mu_A^{(n)}}/{\mu_A}} , Po\p{\bar B_{(n)}}}; H_n}$$

$$\leq E\p{ \abs{ \bar B_{(n)}{\mu_A^{(n)}}/{\mu_A}-\bar B_{(n)}}; H_n}$$

$$\leq E\p{ \bar B_{(n)} n^{-\frac{1}{2}+\frac{1}{2}\varepsilon_n}}=\OO(n^{-\frac{1}{2}+\frac{1}{2}\varepsilon_n}).$$

This implies, using the union bound and Markov's inequality, 

$$P\p{d_{TV}\p{Po\p{\bar B_{(n)}{\mu_A^{(n)}}/{\mu_A}} , Po\p{\bar B_{(n)}}} \geq n^{-\frac{1}{2}+\varepsilon_n} }$$

$$\leq  P(H_n^c) +\OO(n^{-\frac{1}{2}\varepsilon_n}).$$

This proves the inequality in (\ref{lemmaeq1}), and the proof of the inequality in  (\ref{lemmaeq2}) is completely analogous. 

\end{proof}

\begin{proof}[Proof of Theorem \ref{main2}]
Let $\varepsilon_n, q$ and $\gamma$ be as in Theorem  \ref{main2}, and let $T\in\mathbb{N}_0$ be the number of iterations in  the construction algorithm on page \pageref{constralg}  (i.e.  $\mathcal N_T=\mathcal E_T $). By Claim  \ref{claim:coupling}, w.h.p. the vertices of $V_n$ and $V_n'$ that are explored in the first $\floor{n^{\gamma-\varepsilon_n}}\wedge T$ steps of the  construction algorithm are distinct. 
Combining Claim  \ref{claim:coupling} with Proposition \ref{claim2} gives the assertion of Theorem \ref{main2} by the union bound.
\end{proof}

\section{Acknowledgements}

This research is supported by the Swedish Research Council (Vetenskapsr{\aa}det) grant 2016-04566. 

 The author would also like to thank Pieter Trapman for  fruitful discussions and constructive criticism of the paper.

\newpage

\printbibliography

@article{Becker95,
title = "The effect of household distribution on transmission and control of highly infectious diseases",
journal = "Mathematical Biosciences",
volume = "127",
number = "2",
pages = "207 - 219",
year = "1995",
issn = "0025-5564",
doi = "https://doi.org/10.1016/0025-5564(94)00055-5",
url = "http://www.sciencedirect.com/science/article/pii/0025556494000555",
author = "Niels G. Becker and Klaus Dietz"
}

@article{greatcircle2003,
title = "The great circle epidemic model",
journal = "Stochastic Processes and their Applications",
volume = "107",
number = "2",
pages = "233 - 268",
year = "2003",
issn = "0304-4149",
doi = "https://doi.org/10.1016/S0304-4149(03)00074-7",
url = "http://www.sciencedirect.com/science/article/pii/S0304414903000747",
author = "Frank Ball and Peter Neal"
}

@article{smallworld1998,
author = {Watts, Duncan and  Strogatz, Steven},
year = {1998},
month = {07},
pages = {440-2},
title = {Collective Dynamics of Small World Networks},
volume = {393},
journal = {Nature},
doi = {10.1038/30918}
}

@misc{Sagitov2017,
Author = {Serik Sagitov},
Title = {Regenerative multi-type Galton-Watson processes},
Year = {2017},
Eprint = {arXiv:1708.02636},
}

@article{NishiuraChowell2014,
   author = "Nishiura, Hiroshi and Chowell, Gerado",
   title = "Early transmission dynamics of Ebola virus disease (EVD), West Africa, March to August 2014",
   journal = "Eurosurveillance",
   year = "2014",
   volume = "19",
   number = "36", 
   eid = 20894,
   url = "https://www.eurosurveillance.org/content/10.2807/1560-7917.ES2014.19.36.20894",
   doi = "https://doi.org/10.2807/1560-7917.ES2014.19.36.20894" 
}

@article{WHOteam2015,
author = {Dye, Christopher and Aylward, Bruce and Barboza, Philippe and Bawo, Luke and Bertherat, Eric and Bilivogui, Pepe and Blake, Isobel and Brennan, Rick and Briand, Sylvie and Chakauya, Jethro and Chitala, Kennedy and M. Conteh, Roland and Cori, Anne and Croisier, Alice and Jean-Marie, Dangou and Diallo, Boubacar and A. Donnelly, Christl and Eckmanns, Tim and M. Ferguson, Neil and Yoti, Zabulon},
year = {2015},
month = {01},
pages = {189},
title = {Ebola virus disease in West Africa--the first 9 months},
volume = {372},
journal = {The New England journal of medicine},
doi = {10.1056/NEJMc1413884}
}

@article{Olofsson1996,
author = "Olofsson, Peter",
doi = "10.1214/aoap/1034968073",
fjournal = "The Annals of Applied Probability",
journal = "Ann. Appl. Probab.",
month = "02",
number = "1",
pages = "238--268",
publisher = "The Institute of Mathematical Statistics",
title = "Branching processes with local dependencies",
url = "https://doi.org/10.1214/aoap/1034968073",
volume = "6",
year = "1996"
}

@article{BallShaw2015,
  title={Estimating the within-household infection rate in emerging SIR epidemics among a community of households},
  author={Ball, Frank and Shaw, Laurence},
  journal={Journal of Mathematical Biology},
  volume={71},
  number={6},
  pages={1705--1735},
  year={2015},
  publisher={Springer}
}

@book{barbour1992poisson,
  title={Poisson Approximation},
  author={Barbour, Andrew and Holst, Lars and Janson, Svante},
  isbn={9780198522355},
  lccn={lc91043589},
  series={Oxford science publications},
  url={https://books.google.se/books?id=TA3vAAAAMAAJ},
  year={1992},
  publisher={Clarendon Press}
}

@article{biggins2004,
author = "Biggins, John and Kyprianou, Andreas",
doi = "10.1239/aap/1086957585",
fjournal = "Advances in Applied Probability",
journal = "Adv. in Appl. Probab.",
month = "06",
number = "2",
pages = "544--581",
publisher = "Applied Probability Trust",
title = "Measure change in multitype branching",
url = "https://doi.org/10.1239/aap/1086957585",
volume = "36",
year = "2004"
}

@article{Jagers1989,
title = "General branching processes as Markov fields",
journal = "Stochastic Processes and their Applications",
volume = "32",
number = "2",
pages = "183 - 212",
year = "1989",
issn = "0304-4149",
doi = "https://doi.org/10.1016/0304-4149(89)90075-6",
url = "http://www.sciencedirect.com/science/article/pii/0304414989900756",
author = "Peter Jagers",
keywords = "branching processes, population growth, Markov fields"
}

@article{Fournier2015,
  title={On the rate of convergence in Wasserstein distance of the empirical measure},
  author={Fournier, Nicolas and Guillin, Arnaud},
  journal={Probability Theory and Related Fields},
  volume={162},
  number={3},
  pages={707--738},
  year={2015},
  publisher={Springer}
}

@article{MeinersIksanov,
title = "Rate of convergence in the law of large numbers for supercritical general multi-type branching processes",
journal = "Stochastic Processes and their Applications",
volume = "125",
number = "2",
pages = "708 - 738",
year = "2015",
issn = "0304-4149",
doi = "https://doi.org/10.1016/j.spa.2014.10.004",
url = "http://www.sciencedirect.com/science/article/pii/S0304414914002440",
author = "Alexander Iksanov and Matthias Meiners",
keywords = "Markov renewal theory, Supercritical general multi-type branching process"
}

@article{Ludwig,
title = "Final size distribution for epidemics",
journal = "Mathematical Biosciences",
volume = "23",
number = "1",
pages = "33 - 46",
year = "1975",
issn = "0025-5564",
doi = "https://doi.org/10.1016/0025-5564(75)90119-4",
url = "http://www.sciencedirect.com/science/article/pii/0025556475901194",
author = "Donald Ludwig"
}

@article{greatcircleSIS2008,
 ISSN = {00219002},
 URL = {http://www.jstor.org/stable/27595961},
 author = {Peter Neal},
 journal = {Journal of Applied Probability},
 number = {2},
 pages = {513--530},
 publisher = {Applied Probability Trust},
 title = {The SIS Great Circle Epidemic Model},
 volume = {45},
 year = {2008}
}

@book{mixture,
author={Heesterbeek, Hans and Britton, Tom and Diekmann, Odo},
title={Mathematical Tools for Understanding Infectious Disease Dynamics},
publisher={Princeton University Press},
year={2013}
}

@article{intersection, 
title={On Random Intersection Graphs: The Subgraph Problem}, 
volume={8},
 number={1-2},
 journal={Combinatorics, Probability and Computing},
 publisher={Cambridge University Press}, 
author={Karoński, Michał and Scheinerman, Edward  and  Singer-cohen, Karen},
 year={1999}, 
pages={131–159}}

@article{tunableclustering,
author = "Britton, Tom and Deijfen, Maria and Lagerås, Andreas N. and Lindholm, Mathias",
doi = "10.1239/jap/1222441827",
fjournal = "Journal of Applied Probability",
journal = "J. Appl. Probab.",
month = "09",
number = "3",
pages = "743--756",
publisher = "Applied Probability Trust",
title = "Epidemics on random graphs with tunable clustering",
url = "https://doi.org/10.1239/jap/1222441827",
volume = "45",
year = "2008"
}

@article{R0I,
	Author = {Pellis, Lorenzo and Ball, Frank and Trapman, Pieter},
	Journal = {Mathematical Biosciences},
	Number= {1},
	Pages = {85--97},
	Title = {Reproduction numbers for epidemic models with households and other social structures. I. Definition and calculation of $R_0$ },
	Volume = {235},
	Url = {http://www.sciencedirect.com/science/article/pii/S0025556411001556},
	Year = {2012}}

@article{Nerman1981,
  title={On the convergence of supercritical general {(CMJ)} branching processes},
  author={Nerman, Olle},
  journal={Zeitschrift f{\"u}r Wahrscheinlichkeitstheorie und verwandte Gebiete},
  volume={57},
  number={3},
  pages={365--395},
  year={1981},
  publisher={Springer}
}

@article{backward_adv,
	Author = {Frank Ball and David Sirl and Pieter Trapman},
	Journal = {The Annals of Applied Probability},
	Number= {3},
	Pages = {1081--1128},
	Title = {Epidemics on random intersection graphs},
	Volume = {24},
	Url = {https://projecteuclid.org/euclid.aoap/1398258096},
	Year = {2014}}

@article{R0II,
title = "Reproduction numbers for epidemic models with households and other social structures {II}: Comparisons and implications for vaccination ",
journal = "Mathematical Biosciences ",
volume = "274",
pages = "108 - 139",
year = "2016",
issn = "0025-5564",
doi = "http://dx.doi.org/10.1016/j.mbs.2016.01.006",
url = "//www.sciencedirect.com/science/article/pii/S0025556416000171",
author = "Frank Ball and Lorenzo Pellis and Pieter Trapman"
}

@article{BroadbentHammersley1957,
title={Percolation processes: I. Crystals and mazes},
volume={53}, DOI={10.1017/S0305004100032680},
number={3}, journal={Mathematical Proceedings of the Cambridge Philosophical Society}, 
publisher={Cambridge University Press},
author={Broadbent, Simon and Hammersley, John},
year={1957}, pages={629–641}}

@article{ball1997epidemics,
  title={Epidemics with two levels of mixing},
  author={Ball, Frank and Mollison, Denis and Scalia-Tomba, Gianpaolo},
  journal={The Annals of Applied Probability},
  pages={46--89},
  year={1997},
  publisher={JSTOR}
}

@article{Watson72,
  title={On an epidemic in a stratified population},
  author={Watson, Raymond Keith},
  journal={Journal of Applied Probability},
  volume={9},
  number={3},
  pages={659--666},
  year={1972},
  publisher={Cambridge University Press}
}

@article{overlapping,
title = "A general model for stochastic {SIR} epidemics with two levels of mixing ",
journal = "Mathematical Biosciences ",
volume = "180",
number = "1–2",
pages = "73 - 102",
year = "2002",
issn = "0025-5564",
doi = "https://doi.org/10.1016/S0025-5564(02)00125-6",
url = "http://www.sciencedirect.com/science/article/pii/S0025556402001256",
author = "Frank Ball and Peter Neal"
}

@article{households,
 ISSN = {00018678},
 URL = {http://www.jstor.org/stable/27793902},
 abstract = {In this paper we consider a stochastic SIR (susceptible→infective→removed) epidemic model in which individuals may make infectious contacts in two ways, both within 'households' (which for ease of exposition are assumed to have equal size) and along the edges of a random graph describing additional social contacts. Heuristically motivated branching process approximations are described, which lead to a threshold parameter for the model and methods for calculating the probability of a major outbreak, given few initial infectives, and the expected proportion of the population who are ultimately infected by such a major outbreak. These approximate results are shown to be exact as the number of households tends to infinity by proving associated limit theorems. Moreover, simulation studies indicate that these asymptotic results provide good approximations for modestly sized finite populations. The extension to unequal-sized households is discussed briefly.},
 author = {Frank Ball and David Sirl and Pieter Trapman},
 journal = {Advances in Applied Probability},
 number = {3},
 pages = {765-796},
 publisher = {Applied Probability Trust},
 title = {Threshold behaviour and final outcome of an epidemicon a random network with household structure},
 volume = {41},
 year = {2009}
}

@article{ball2012,
author = "Ball, Frank and Sirl, David",
doi = "10.1239/aap/1331216645",
fjournal = "Advances in Applied Probability",
journal = "Adv. in Appl. Probab.",
month = "03",
number = "1",
pages = "63--86",
publisher = "Applied Probability Trust",
title = "An SIR epidemic model on a population with random network and household structure, and several types of individuals",
url = "https://doi.org/10.1239/aap/1331216645",
volume = "44",
year = "2012"
}

@article{households2,
title = "Analysis of a stochastic SIR epidemic on a random network incorporating household structure",
journal = "Mathematical Biosciences",
volume = "224",
number = "2",
pages = "53 - 73",
year = "2010",
issn = "0025-5564",
doi = "https://doi.org/10.1016/j.mbs.2009.12.003",
url = "http://www.sciencedirect.com/science/article/pii/S0025556409002016",
author = "Frank Ball and David Sirl and Pieter Trapman"
}

@book{nummelin, place={Cambridge}, series={Cambridge Tracts in Mathematics}, 
title={General Irreducible Markov Chains and Non-Negative Operators}, DOI={10.1017/CBO9780511526237},
publisher={Cambridge University Press},
author={Nummelin, Esa}, 
year={2004},
collection={Cambridge Tracts in Mathematics}}

@article{perc1,
author = {Frisch, Harry and Hammersley, John},
title = {Percolation Processes and Related Topics},
journal = {Journal of the Society for Industrial and Applied Mathematics},
volume = {11},
number = {4},
pages = {894-918},
year = {1963},
doi = {10.1137/0111066},
URL = {https://doi.org/10.1137/0111066},
eprint = {https://doi.org/10.1137/0111066}
}

@article{PellisFergusonFraser2011,
  title={Epidemic growth rate and household reproduction number in communities of households, schools and workplaces},
  author={Pellis, Lorenzo and Ferguson, Neil M and Fraser, Christophe},
  journal={Journal of mathematical biology},
  volume={63},
  number={4},
  pages={691--734},
  year={2011},
  publisher={Springer}
}
\end{document}